\newcommand{\p}{\partial}
\newcommand{\e}{\varepsilon}
\newcommand{\R}{{\mathbb R}}
\newcommand{\IP}{{\mathbb P}}
\newcommand{\E}{{\mathbb E}}
\newcommand{\BBB}{\boldsymbol{\mathit B}}
\newcommand{\PPP}{\boldsymbol{\mathit P}}
\newcommand{\XXX}{\boldsymbol{\mathit X}}
\newcommand{\IIP}{\boldsymbol{\mathbb P}}
\newcommand{\GGamma}{\boldsymbol\Gamma}
\newcommand{\ttau}{{\boldsymbol\tau}}
\newcommand{\oomega}{{\boldsymbol\omega}}
\newcommand{\OOmega}{{\boldsymbol\Omega}}
\newcommand{\BB}{{\cal B}}
\newcommand{\DD}{{\cal D}}
\newcommand{\EE}{{\cal E}}
\newcommand{\FF}{{\cal F}}
\newcommand{\KK}{{\cal K}}
\newcommand{\LL}{{\cal L}}
\newcommand{\PP}{{\cal P}}
\newcommand{\RR}{{\cal R}}
\newcommand{\XX}{{\cal X}}
\newcommand{\YY}{{\cal Y}}
\newcommand{\RRR}{\boldsymbol\RR}
\newcommand{\FFF}{\boldsymbol\FF}
\newcommand{\PPPP}{{\mathfrak P}}
\newcommand{\nnn}{{\boldsymbol{\mathit n}}}
\newcommand{\uuu}{{\boldsymbol{\mathit u}}}
\newcommand{\diver}{\mathop{\rm div}\nolimits}
\newcommand{\esssup}{\mathop{\rm ess\ sup}}
\theoremstyle{plain}
\newtheorem*{mta}{Theorem A}
\newtheorem*{mtb}{Theorem B}
\newtheorem{theorem}{Theorem}[section]
\newtheorem{proposition}[theorem]{Proposition}
\theoremstyle{definition}
\newtheorem{definition}[theorem]{Definition}
\newtheorem{problem}[theorem]{Problem}
\theoremstyle{remark}
\newtheorem{example}[theorem]{Example}
\newtheorem*{example*}{Example}
\numberwithin{equation}{section}
\begin{document}
\author{Armen Shirikyan\footnote{Department of Mathematics, University of Cergy--Pontoise, CNRS UMR 8088, 2 avenue Adolphe Chauvin, 95302 Cergy--Pontoise, France; E-mail: Armen.Shirikyan@u-cergy.fr}}
\title{Control and mixing for 2D Navier--Stokes equations with space-time localised noise}
\date{\today}
\maketitle
\begin{abstract}
We consider randomly forced 2D Navier--Stokes equations in a bounded domain with smooth boundary. It is assumed that the random perturbation is non-degenerate, and its law is periodic in time and has a  support localised with respect to space and time. Concerning the unperturbed problem, we assume that it is approximately controllable in infinite time by an external force whose support is included in that of the random force. Under these hypotheses, we prove that the Markov process generated by the restriction of solutions to the instants of time proportional to the period possesses  a unique stationary distribution, which is exponentially mixing. The proof is based on a coupling argument, a local controllability property of the Navier--Stokes system, an estimate for the total variation distance between a measure and its image under a smooth mapping, and some classical results from the theory of optimal transport.

\smallskip
\noindent
{\bf AMS subject classifications:} 35Q30, 60H15, 60J05, 93B05, 93C20

\smallskip
\noindent
{\bf Keywords:} Navier--Stokes equations, stationary measures, exponential mixing
\end{abstract}

\tableofcontents

\section{Introduction}
\label{s1}
The main results of this paper can be summarised as follows: first, suitable controllability properties of a non-linear PDE imply the uniqueness and exponential mixing for the associated stochastic dynamics and, second, these properties are satisfied for 2D Navier--Stokes equations with space-time localised noise. To be precise, let us consider from the very beginning the 2D Navier--Stokes system in a bounded domain $D\subset\R^2$ with smooth boundary $\p D$:
\begin{align}
\dot u+\langle u, \nabla\rangle u-\nu\Delta u+\nabla p&=f(t,x), \quad \diver u=0, 
\quad x\in D, \label{1.1}\\
u\bigr|_{\p D}&=0. \label{1.2}
\end{align}
Here $u=(u_1,u_2)$ and~$p$ are unknown velocity and pressure of the fluid,
$\nu>0$ is the viscosity, and~$f$ is an external force. Let us assume that~$f$ is represented as the sum of two functions~$h$ and~$\eta$, the first of which is a given function that is~$H^1$ smooth in space and time and has a locally bounded norm, while the second is either a control or a random force:
\begin{equation} \label{1.3}
f(t,x)=h(t,x)+\eta(t,x). 
\end{equation}
In both cases, we assume that~$\eta$ is sufficiently smooth and bounded, and its restriction to any cylinder of the form $ J_k\times D$ with $ J_k=[k-1,k]$ is localised in both space and time (see below for a more precise description of this hypothesis). Let us denote by~$\nnn$ the outward unit normal to the boundary~$\p D$ and introduce the space 
\begin{equation} \label{1.25}
H=\{u\in L^2(D,\R^2): \diver u=0\mbox{ in $D$}, 
\langle u,\nnn\rangle=0\mbox{ on~$\p D$}\},
\end{equation}
which will be endowed with the usual~$L^2$ norm~$\|\cdot\|$. It is well known that for any $u_0\in H$ problem~\eqref{1.1}, \eqref{1.2} supplemented with the initial condition 
\begin{equation} \label{1.05}
u(0,x)=u_0(x)
\end{equation}
has a unique solution $u=u(t;u_0,f)$, which is a continuous function of time valued in~$H$. 

To state our first result, we introduce a localisation hypothesis on the control~$\eta$. Let $Q\subset  J_1\times D$ be an open set and let $Q_k=\{(t,x):(t-k+1,x)\in Q\}$. 

\begin{description}
\item[\sl Localisation.]
For any integer~$k\ge1$, the restriction of~$\eta$ to the cylinder $ J_k\times D$ is supported by~$Q_k$.
\end{description}

The theorem below on the stabilisation to a non-stationary solution for Navier--Stokes equations is an immediate consequence of a contraction property established in Theorem~\ref{t3.1}. 

\begin{mta}
For any $\rho>0$ and~$\alpha>0$ there is a finite-dimensional subspace $E\subset H_0^1(Q,\R^2)$ and positive constants~$C$ and~$d$ such that the following assertions hold for any functions $\hat u_0\in H$ and $h\in H^1_{\rm loc}(\R_+\times D,\R^2)$ satisfying the inequalities
$$
\|\hat u_0\|\le \rho, \qquad \|h\|_{H^1(J_k\times D)}\le\rho\quad\mbox{for all $k\ge1$}.
$$
\begin{itemize}
\item[\bf(i)]
For any $u_0\in H$ satisfying the condition $\|u_0-\hat u_0\|\le d$ there is a control~$\eta$ such that the restriction of $\eta(t+k-1,x)$ to~$J_1\times D$ belongs to~$E$ for any integer~$k\ge1$ and
\begin{equation} \label{1.5}
\|u(t;u_0,h+\eta)-u(t;\hat u_0,h)\| \le Ce^{-\alpha t}\|u_0-\hat u_0\|, \quad t\ge0. 
\end{equation}
\item[\bf(ii)]
The mapping $(\hat u_0,u_0,h)\mapsto \eta$ is Lipschitz continuous for appropriate norms. Moreover, 
\begin{equation} \label{1.6}
\|\eta\|_{L^2(Q_k,\R^2)}\le Ce^{-\alpha k}\|u_0-\hat u_0\|, \quad k\ge1. 
\end{equation}
\end{itemize}
\end{mta}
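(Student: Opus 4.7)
The plan is to deduce the statement by iterating over successive unit intervals $J_k=[k-1,k]$ the one-period contraction supplied by Theorem~\ref{t3.1}. I expect Theorem~\ref{t3.1} to assert that, for any $\rho>0$, there exist a finite-dimensional subspace $E\subset H_0^1(Q,\R^2)$, a radius $d>0$, and a contraction factor $q\in(0,1)$ (which can be made arbitrarily small by enlarging $\dim E$) together with a Lipschitz map $(\hat u_0,u_0,h)\mapsto\eta\in E$, supported in $Q$, such that
\[
\|u(1;u_0,h+\eta)-u(1;\hat u_0,h)\|\le q\,\|u_0-\hat u_0\|,\qquad \|\eta\|_{L^2(Q)}\le C\|u_0-\hat u_0\|,
\]
whenever $\|\hat u_0\|\le\rho$, $\|h\|_{H^1(J_1\times D)}\le\rho$, and $\|u_0-\hat u_0\|\le d$. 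Given the prescribed rate $\alpha>0$, I would fix $E$ large enough so that $q\le e^{-\alpha}$.

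For part~\textbf{(i)}, I would construct the control inductively. Set $u_k=u(k;u_0,h+\eta)$ and $\hat u_k=u(k;\hat u_0,h)$, and on each interval $J_k$ apply Theorem~\ref{t3.1} with reference data $\hat u_{k-1}$, initial data $u_{k-1}$, and shifted forcing $h(\cdot+k-1,\cdot)$, obtaining a control $\eta_k$ on $J_k$ whose time-translate to $J_1$ lies in $E$. A standard a priori energy estimate for~\eqref{1.1}, \eqref{1.2} driven by $h$ produces a uniform $\rho'=\rho'(\rho)$ such that $\|\hat u_{k-1}\|\le\rho'$ for every $k$, so the hypotheses of the one-period theorem are satisfied at each step (applied with $\rho'$ in place of $\rho$). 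Induction then gives $\|u_k-\hat u_k\|\le q^k\|u_0-\hat u_0\|\le d$, legitimising the following step. To pass from integer times to continuous time, a standard short-time continuous dependence estimate for the difference of two Navier--Stokes solutions on $J_k$ yields $\|u(t)-\hat u(t)\|\le C_1\|u_{k-1}-\hat u_{k-1}\|$ for $t\in J_k$, and the bound~\eqref{1.5} follows with $C$ depending only on $\rho$ and $\alpha$.

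For part~\textbf{(ii)}, the Lipschitz dependence of each $\eta_k$ on the data $(\hat u_{k-1},u_{k-1},h|_{J_k})$, guaranteed by Theorem~\ref{t3.1}, propagates through the recursion with successive constants of order $q^{k-1}$; since these are summable, the full map $(\hat u_0,u_0,h)\mapsto\eta$ is Lipschitz for the natural product norms. Estimate~\eqref{1.6} is a direct consequence of the one-step bound $\|\eta_k\|_{L^2(Q_k)}\le C\|u_{k-1}-\hat u_{k-1}\|\le C q^{k-1}\|u_0-\hat u_0\|$ together with $q\le e^{-\alpha}$.

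The main obstacle is really absorbed into Theorem~\ref{t3.1}; once the one-period local contraction with a quantitative Lipschitz control law is granted, the passage to infinite time is a routine geometric iteration. The only point that requires care is to verify that the same subspace $E$ and radius $d$ can be used uniformly in $k$, which comes down to the uniform-in-time a priori bound on the reference trajectory $\hat u_k$ driven by the locally bounded forcing $h$.
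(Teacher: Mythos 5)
Your proposal is correct and follows essentially the same route as the paper: the paper likewise iterates the one-period contraction of Theorem~\ref{t3.1} with $q=e^{-\alpha}$ over successive intervals $J_k$, using the uniform a priori bound on the reference trajectory to apply the same $d$, $C$, and subspace $E=\EE_m$ at every step, and then passes to continuous time via the Lipschitz continuity of the resolving operator, with \eqref{1.6} following from the one-step control bound and the geometric decay. The only minor discrepancy is in part~(ii): the paper does not assert summable Lipschitz constants of order $q^{k-1}$, but rather records a bound of the form $C_1^k$ for the restriction of the control to the $k$-th cylinder, which still suffices for the (deliberately loose) claim of Lipschitz continuity ``for appropriate norms''.
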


Let us note that a similar result was proved earlier in~\cite{BRS-2010} for the 3D Navier--Stokes system (see also~\cite{fursikov-2004,barbu-2003,BT-2004} for some results on stabilisation to a stationary solution). Here we establish some additional properties for the control. Namely, we show that it is finite-dimensional in both space and time and  can be chosen to be a  smooth function with respect to the force and the initial state corresponding to the reference solution. 

\smallskip
We now turn to the stochastic problem. Informally speaking, we assume that the deterministic force~$h$ is $1$-periodic in time, and the stochastic force~$\eta$ satisfies the following three conditions. Let us denote by~$\eta_k$ the restriction of $\eta(t+k-1,x)$ to the domain $J_1\times D$.

\begin{description}
\item[\sl Independence.]
The functions~$\eta_k$ form a sequence of i.i.d.~random variables in~$H^1( J_1\times D,\R^2)$. 
\item[\sl Non-degeneracy.]
There is a compact subset $\KK\subset H_0^1(Q,\R^2)$ such that the support of the law for~$\eta_k$ coincides with~$\KK$, and the law of the projection of~$\eta_k$ to any finite-dimensional subspace of~$L^2(Q,\R^2)$ has a regular density with respect to the Lebesgue measure.
\item[\sl Approximate controllability.]
There is $\hat u\in H$ such that problem \eqref{1.1}--\eqref{1.3} is approximately controllable to~$\hat u$ with a control function~$\tilde\eta$ such that for any $k\ge1$ the restriction of $\tilde\eta(t+k-1,x)$ to~$J_1\times D$ belongs to~$\KK$, and the time of control can be chosen the same for  the initial functions~$u_0$  from a given bounded subset. 
\end{description}
We refer the reader to Section~\ref{s2.1} for the exact description of the hypotheses imposed on the stochastic system~\eqref{1.1}--\eqref{1.3}. Note that the hypothesis on independence implies that the restrictions of solutions to the integer times form a Markov chain in~$H$. The following theorem describes the long-time asymptotics of this chain. 

\begin{mtb}
Under the above hypotheses, the Markov chain associated with problem~\eqref{1.1}--\eqref{1.3} has a unique stationary measure~$\mu$. Moreover, there are positive constants~$C$ and~$\gamma$ such that, for any $1$-Lipschitz function $F:H\to\R$ and any $u_0\in H$, we have 
\begin{equation} \label{1.7}
\biggl|\E \,F\bigl(u(k;u_0,h+\eta)\bigr)-\int_HF(v)\mu(dv)\biggr|
\le C\bigl(1+\|u_0\|\bigr)e^{-\gamma k}, \quad k\ge0.
\end{equation}
\end{mtb}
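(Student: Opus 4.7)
The plan is to build a coupling of two copies of the Markov chain, one starting from~$u_0$ and the other from~$u_0'$, that exploits Theorem~A (deterministic stabilisation by a finite-dimensional, Lipschitz feedback) together with the non-degeneracy of the noise, and then to deduce exponential mixing by a standard Kantorovich-type contraction argument. A preliminary step is a dissipativity/Lyapunov estimate for 2D Navier--Stokes with bounded forcing, giving $\E\|u(k;u_0,h+\eta)\|^2\le C(1+\|u_0\|^2)e^{-\beta k}+C$; this lets us restrict attention to initial data in a fixed ball $B_R\subset H$ and yields the factor $1+\|u_0\|$ in~\eqref{1.7}.

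Fix $u_0,u_0'\in B_R$, set $\rho=1+R+\|\hat u\|$, and let $d>0$ and $E\subset H^1_0(Q,\R^2)$ be the constant and finite-dimensional subspace furnished by Theorem~A for this~$\rho$. Approximate controllability produces a common time $K=K(R)$ and, for each initial state in~$B_R$, a control in~$\KK$ driving the solution to within $d/4$ of~$\hat u$ at time~$K$. Continuity of the time-$K$ flow in the force, combined with the non-degeneracy hypotheses $\supp(\eta_k)=\KK$ and positive-density finite-dimensional projections, then gives $p=p(R)>0$, independent of $u_0,u_0'\in B_R$, such that
$$
\IIP\bigl\{\|u(K;u_0,h+\eta)-\hat u\|\vee\|u(K;u_0',h+\eta')-\hat u\|\le d/2\bigr\}\ge p,
$$
where $(\eta_k)$ and $(\eta_k')$ are independent realisations of the noise used on the two chains up to time~$K$.

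On the event of the preceding inequality, the two trajectories satisfy $\|u_K-u'_K\|\le d$, so Theorem~A applies: it provides a Lipschitz feedback $\eta^*=\eta^*(u_K,u'_K,h)$ with values in~$E$ such that $\|\eta^*_k\|_{L^2(Q,\R^2)}\le Ce^{-\alpha k}\|u_K-u'_K\|$ and, if the second chain is driven by $\eta+\eta^*$ rather than by $\eta$, the two trajectories satisfy~\eqref{1.5} for all $t\ge K$. To realise this deterministic construction as an honest coupling, I invoke the total-variation-for-smooth-mappings estimate announced in the abstract: since $E$ is finite-dimensional and the projection of the law of $\eta_k$ to~$E$ has a regular density, the total variation distance between $\mathcal{L}(\eta_k)$ and its image under translation by $\eta^*_k\in E$ is bounded by $C_E\|\eta^*_k\|_E$. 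Maximal coupling at each step $k\ge K$, together with the exponential decay of $\|\eta^*_k\|_E$, yields a common probability space on which $\eta_k$ and $\eta_k+\eta^*_k$ coincide simultaneously for all $k\ge K$ with probability at least $1-C\|u_K-u'_K\|\sum_{k\ge 1}e^{-\alpha k}$, and hence the two trajectories stay exponentially close forever after.

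Combining these ingredients in the classical way produces a contraction of the $K$-step Markov operator in dual Lipschitz distance on probability measures supported in~$B_R$: there exists $q<1$ such that
$$
\|P_K^*\lambda-P_K^*\lambda'\|_L^*\le q\,\|\lambda-\lambda'\|_L^*
$$
for all $\lambda,\lambda'$ supported in~$B_R$. Iterating this inequality and feeding back the Lyapunov estimate of the first paragraph yields existence, uniqueness, and the exponential convergence~\eqref{1.7} by a now-standard argument. The main obstacle is the coupling step: the feedback~$\eta^*$ must be simultaneously finite-dimensional (for the total-variation bound to apply), Lipschitz in the initial data (so that the shifted noise stays adapted and the maximal couplings concatenate measurably), and exponentially small in time (so that the tail sum of coupling defects is finite and tends to~$0$ as $u_0\to u_0'$). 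These are precisely the three refinements that Theorem~A was tailored to provide, and verifying that they mesh with the smooth-mapping total-variation estimate is where the bulk of the technical work of this step lies.
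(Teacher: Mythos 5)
Your overall architecture coincides with the paper's: reduction to a bounded (in fact compact absorbing) set, recurrence to a small neighbourhood of the diagonal via Hypothesis~(H2) and the non-degeneracy of the noise, and a coupling on that neighbourhood built from the finite-dimensional Lipschitz feedback of Theorem~\ref{t3.1}, a total-variation estimate for finite-dimensional perturbations of the noise law, and measurable maximal/optimal couplings. Two points, however, do not work as you state them. First, your concluding step --- a strict contraction $\|P_K^*\lambda-P_K^*\lambda'\|_L^*\le q\|\lambda-\lambda'\|_L^*$ on measures supported in $B_R$ --- does not follow from the ingredients you assembled. For two Dirac masses at distance $\|u-u'\|\le d$, your coupling gives closeness after $K$ steps only outside an event of probability $C\|u-u'\|$, where $C$ is the (not small) constant coming from the total-variation bound; on that event the distance can be of the order of the diameter, so the best one gets is $\|P_K(u,\cdot)-P_K(u',\cdot)\|_L^*\le (Ce^{-\alpha K}+C')\|u-u'\|$ with $C'$ not less than one, which is no contraction in the plain dual-Lipschitz metric. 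The standard repair is either to pass to an equivalent ``truncated'' Kantorovich metric of the form $\min(1,\|u-u'\|/d')$ with $d'$ small, or --- as the paper does --- to abandon the one-step contraction altogether and verify the recurrence and exponential-squeezing conditions \eqref{2.22}--\eqref{2.25} for the coupled chain, concluding via the abstract criterion of Proposition~\ref{p2.2}: the hitting time of $\BBB=\{\|u-u'\|\le d\}$ has exponential moments by (H2), and once in $\BBB$ the iterated one-step coupling of Proposition~\ref{p2.3} succeeds forever with probability bounded away from zero because the failure probabilities $C\,2^{-k}\|u-u'\|$ are summable.

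Second, the shift of the noise is not a translation by a deterministic element $\eta_k^*\in E$. In the stochastic setting the reference trajectory is the first (random) chain, driven on the $k$-th interval by $h+\eta_k$; hence the feedback applied to the second chain is $\varPhi(h+\eta_k,u_{k-1})(u_{k-1}-u_{k-1}')$ and depends on $\eta_k$ itself. The relevant estimate is therefore not a bound on the total variation between a law and its translate, but Proposition~\ref{p5.7}, which bounds $\|\lambda-\varPsi_*\lambda\|_{\rm var}$ for a $C^1$ perturbation of the identity with finite-dimensional range and Lipschitz constant proportional to $\|u-u'\|$; one also needs the cut-off $\chi(\|h+\eta\|_1)$ to keep $\varPsi$ globally Lipschitz, the smoothness and Lipschitz continuity of $\varPhi$ in $(h,\hat u_0)$ from Theorem~\ref{t3.1} (this is exactly why the paper insists on these refinements), and the measurable-selection result of Proposition~\ref{p5.4} so that the couplings can be concatenated into a Markov chain on $X\times X$. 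With these corrections your argument becomes the paper's proof.
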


Let us note that the problem of ergodicity was studied intensively in the last  fifteen years. First results in this direction were established in~\cite{FM-1995,KS-CMP2000,EMS-2001,BKL-2002}, and we refer the reader to the book~\cite{KS2011} for further references. Most of the results established so far concern the situation in which the random force is non-degenerate in all determining modes of the problem. In the case when the equation is studied on the torus and the deterministic force is zero, it was proved in~\cite{HM-2006,HM-2008} that the Navier--Stokes dynamics is exponentially mixing for any $\nu>0$, provided that the noise is white in time and has a few non-zero Fourier modes as a function of~$x$ (thus it is finite-dimensional in space and infinite-dimensional in time). Theorem~B applies to all $\nu>0$, and the random noise does not act directly on the deterministic modes. However, in contrast to~\cite{HM-2006,HM-2008}, our noise is localised not in the Fourier modes with respect to~$x$, but in the physical space and in time. Furthermore, our approach does not use the Malliavin calculus and is based on a detailed study of controllability properties for the Navier--Stokes system and a result on the image of measures on a Hilbert space under finite-dimensional transformations. To the best of my knowledge, Theorem~B provides a first result on mixing properties for Navier--Stokes equations with a space-time localised noise. 

In conclusion, let us mention that Theorems~A and~B remain valid in the case when the control and the noise act through the boundary of the domain. This situation will be addressed in a subsequent publication. 

\smallskip
The paper is organised as follows. In Section~\ref{s2}, we formulate the main result of this paper on exponential mixing for the Navier--Stokes system with space-time localised noise and outline its proof. Section~\ref{s3} is devoted to studying a control problem associated with the stochastic system in question. The details of proof of the main result are given in Section~\ref{s4}. The appendix gathers some auxiliary results used in the main text. 

\subsection*{Notation}
For an open set~$Q$ of a Euclidean space, a closed interval~$J\subset\R$, and Banach spaces~$X\subset Y$, we introduce the following function spaces. 

\smallskip
\noindent
$L^p=L^p(Q)$ is the Lebesgue space of measurable scalar or vector functions on~$Q$ whose $p^{\text{th}}$ power is integrable. We shall sometimes write~$L^p(Q,\R^d)$ to emphasise the range of functions. In the case $p=2$, the corresponding norm will be denoted by~$\|\cdot\|$. 

\smallskip
\noindent
$H^s=H^s(Q)$ is the Sobolev space of order~$s$ with the usual norm~$\|\cdot\|_s$. As in the previous case, we use the same notation for spaces of scalar and vector functions. 

\smallskip
\noindent
$H_0^s=H_0^s(Q)$ is the closure in~$H^s$ of the space of infinitely smooth functions with compact support.

\smallskip
\noindent
$B_X(R)$ stands for the ball in~$X$ of radius~$R$ centred at zero. 

\smallskip
\noindent
$L^p(J,X)$ is the space of Borel-measurable functions $u:J\to X$ such that 
$$
\|u\|_{L^p(J,X)}=\biggl(\int_J\|u(t)\|_X^pdt\biggr)^{1/p}<\infty\,;
$$
in the case $p=\infty$, this norm should be replaced by $\|u\|_\infty=\esssup_J\|u(t)\|_X$.

\smallskip
\noindent
$W^{1,p}(J,X)$ is the space of functions $u\in L^p(J,X)$ whose derivative belongs to $L^p(J,X)$. It is endowed with a natural norm. 

\smallskip
\noindent
$W(J,X,Y)$ is the space of functions $u\in L^2(J,X)$ such that $\p_tu\in L^2(J,Y)$. 

\smallskip
\noindent
$\LL(X,Y)$ is the space of continuous linear operators from~$X$ to~$Y$ with the natural norm. In the case $X=Y$, we write $\LL(X)$. 

\smallskip
\noindent
$C_b(X)$ stands  for the space of bounded continuous functions $F:X\to\R$; it is endowed with the norm
$$
\|F\|_\infty=\sup_{u\in X}|F(u)|.
$$

\smallskip
\noindent
$L_b(X)$ denotes the space of functions $F\in C_b(X)$ such that 
$$
\|F\|_L:=\|F\|_\infty+\sup_{u\ne v}\frac{|F(u)-F(v)|}{\|u-v\|_X}<\infty.
$$

\smallskip
\noindent
$\PP(X)$ is the set of probability Borel measures on~$X$. The space~$\PP(X)$ is endowed with the topology of weak convergence, which is generated by the dual-Lipschitz metric
$$
\|\mu_1-\mu_2\|_L^*:=\sup_{\|F\|_L\le 1}|(F,\mu_1)-(F,\mu_2)|, 
$$
where $(F,\mu)$ stands for the integral of~$F$ over~$X$ with respect to~$\mu$.

\medskip
\noindent
We denote by~$D$ a bounded domain with $C^2$ boundary~$\p D$.  For $T>0$, we set $J_T=[0,T]$ and $D_T=J_T\times D$. 
The following functional spaces arise in the theory of Navier--Stokes equations:
\begin{gather*}
V=H\cap H_0^1(D),\quad
\XX_T=W(J_T,V,V^*), \\
\YY_{\delta,T}=\bigl\{u\in\XX_T: u|_{(\delta,T)}\in W((\delta,T),V\cap H^3,V)\bigr\},
\end{gather*}
where $\delta\in (0,T)$, $H$ is defined by~\eqref{1.25}, and~$V^*$ denotes the dual space of~$V$ (identified with a quotient space in~$H^{-1}(D,\R^2)$ with the help of the scalar product in~$L^2$). These spaces are endowed with natural norms. 

\section{Main result and scheme of its proof}
\label{s2}
\subsection{Exponential mixing}
\label{s2.1}
Let $D\subset\R^2$ be a bounded domain with a $C^2$-smooth boundary~$\p D$ and let $D_1=J_1\times D$. Consider the Navier--Stotes system~\eqref{1.1} with the Dirichlet boundary condition~\eqref{1.2} and an external force of the form~\eqref{1.3}. We assume that $h\in H_{\rm loc}^1(\R_+\times D,\R^2)$ is a given function which is $1$-periodic in time and~$\eta$ is a stochastic process of the form
\begin{equation} \label{2.3}
\eta(t,x)=\sum_{k=1}^\infty I_k(t)\eta_k(t-k+1,x), \quad t\ge0,
\end{equation}
where $I_k$ is the indicator function of the interval $(k-1,k)$ and~$\{\eta_k\}$ is a sequence of i.i.d.~random variables in $L^2(D_1,\R^2)$ that are continued by zero for $t\notin J_1$. It is well known that the Cauchy problem for~\eqref{1.1}--\eqref{1.3} is globally well posed. Namely, for any initial function~$u_0\in H$ there is a unique random process $(u,p)$ whose almost every trajectory satisfies the inclusions
$$
u\in C(\R_+,H)\cap L_{\rm loc}^2(\R_+,V), \quad p\in L_{\rm loc}^1(\R_+,L^2),
$$
Equations~\eqref{1.1}, and the initial condition
\begin{equation} \label{2.4}
u(0,x)=u_0(x), \quad x\in D.
\end{equation}
In what follows, we shall drop the~$p$ component of solutions and write simply~$u(t)$. Let us denote by $S:H\times L^2(D_1,\R^2)\to H$ the operator that takes a pair of functions $(u_0,f)$ to~$u(1)$, where $u(t)$ is the solution of~\eqref{1.1}, \eqref{1.2}, \eqref{2.4}. Well-known properties of 2D Navier--Stokes equations imply that~$S$ is a continuous mapping. Moreover, the range of~$S$ is contained in~$V$, and the mapping $S:H\times L^2(D_1,\R^2)\to V$ is uniformly Lipschitz continuous on bounded subsets; e.g., see Chapter~III in~\cite{Te}. 

Let us consider a solution $u(t)$ of~\eqref{1.1}--\eqref{1.3} and denote $u_k=u(k)$. What has been said implies that 
\begin{equation} \label{2.5}
u_k=S(u_{k-1},h+\eta_k), \quad k\ge1.
\end{equation}
Since~$\eta_k$ are i.i.d.~random variables in~$L^2(D_1,\R^2)$,  Eq.~\eqref{2.5} defines a homogeneous family of Markov chains in~$H$, which is denoted by~$(u_k,\IP_u)$, $u\in H$. Let $P_k(u,\Gamma)$ be the transition function for the family~$(u_k,\IP_u)$ and let $\PPPP_k:C_b(H)\to C_b(H)$ and $\PPPP_k^*:\PP(H)\to\PP(H)$ be the corresponding Markov semigroups. 

Let us fix an open set $Q\subset D_1$ and denote by~$\{\varphi_j\}\subset H^1(Q,\R^2)$ an  orthonormal basis in~$L^2(Q,\R^2)$.  Let $\chi\in C_0^\infty(Q)$ be a non-zero function and let $\psi_j=\chi\varphi_j$. In what follows, we shall assume that~$\{\psi_j\}$ are linearly independent\footnote{The assumption on the linear independence of~$\{\psi_j\}$ is not really needed, and Theorem~\ref{t2.1} below remains true without it. We make, however, this assumption to simplify the proof of Proposition~\ref{p2.3}.},  and the function~$h$ and random process~$\eta$ satisfy hypotheses~(H1) and~(H2) formulated below. Note that if $\{\varphi_j\}$ is a complete set of eigenfunctions of the Dirichlet Laplacian in~$Q$, then the functions~$\psi_j$ are linearly independent for any choice of~$\chi$. This is an immediate consequence of the unique continuation property of solutions for elliptic equations; see Theorem~8.9.1 in~\cite{hormander1963}.

\begin{description}
\item[(H1)\ Structure of the noise.]
The random variables~$\eta_k$ can be represented in the form
\begin{equation} \label{2.6}
\eta_k(t,x)=\sum_{j=1}^\infty b_j\xi_{jk}\psi_j(t,x),
\end{equation}
where $\xi_{jk}$ are independent scalar random variables such that $|\xi_{jk}|\le1$ with probability~$1$, and $\{b_j\}\subset\R$ is a non-negative sequence such that 
\begin{equation} \label{2.7}
B:=\sum_{j=1}^\infty b_j\|\psi_j\|_1<\infty.
\end{equation}
Moreover, the law of~$\xi_{jk}$ possesses a $C^1$-smooth density~$\rho_j$ with respect to the Lebesgue measure on the real line.
\end{description}

Let us denote by~$\KK\subset L^2(Q,\R^2)$ the support of the law of~$\eta_k$. The hypotheses imposed on~$\eta_k$ imply that~$\KK$ is a compact subset in $H_0^1(Q,\R^2)$. Continuing the elements of~$\KK$ by zero outside~$Q$, we may regard~$\KK$ as a compact subset of~$H_0^1(D_1,\R^2)$.

\begin{description}
\item[(H2)\ Approximate controllability.]
There is $\hat u\in H$ such that for any positive constants~$R$ and~$\e$ one can find an integer $l\ge1$ with the following property: given $v\in B_H(R)$, one can find $\zeta_1, \dots,\zeta_l\in\KK$ such that 
\begin{equation} \label{2.8}
\|S_l(v,\zeta_1,\dots,\zeta_l)-\hat u\|\le\e,
\end{equation}
where $S_l(v,\zeta_1,\dots,\zeta_l)$ stands for the vector~$u_l$ defined  by~\eqref{2.5} with $\eta_k=\zeta_k$ and~$u_0=v$. 
\end{description}

The following theorem, which is the main result of this paper, establishes the uniqueness and exponential mixing of a stationary distribution for the Markov family generated by~\eqref{2.5}.

\begin{theorem} \label{t2.1}
Assume that $h\in H_{\rm loc}^1(\R_+\times D,\R^2)$ is $1$-periodic in time, and Conditions~{\rm(H1)} and~{\rm(H2)} are satisfied. In this case, there is an integer~$N\ge1$, depending on~$\|h\|_{H^1(D_1)}$, $B$, and~$\nu$, such that if 
\begin{equation} \label{2.9}
b_j\ne0\quad\mbox{for $j=1,\dots,N$},
\end{equation}
then the following assertions hold.
\begin{description}
\item[Existence and uniqueness.]
The Markov family $(u_k,\IP_u)$ has a unique stationary distribution $\mu\in\PP(H)$. 
\item[Exponential mixing.]
There are positive constants~$C$ and~$\gamma$ such that 
\begin{equation} \label{2.10}
\|P_k(u,\cdot)-\mu\|_L^*\le C(1+\|u\|)e^{-\gamma k}
\quad\mbox{for all $u\in H$, $k\ge0$}. 
\end{equation}
\end{description}
\end{theorem}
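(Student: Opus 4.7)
The plan is to combine a standard Bogolyubov--Krylov argument for existence of a stationary measure with a coupling construction for uniqueness and exponential mixing. Since the noise $\eta_k$ takes values in the compact set $\KK\subset H_0^1(D_1,\R^2)$ and $h$ is $1$-periodic and $H^1$--smooth, standard energy estimates for the 2D Navier--Stokes system yield dissipativity: for every $R>0$ there is $R_*>0$ such that $\E\|u_k\|^2\le R_*^2$ for all $k\ge k_0(R)$ whenever $\|u_0\|\le R$. The $H\to V$ smoothing of the flow then makes the family of empirical measures $\tfrac1n\sum_{k=0}^{n-1}\PPPP_k^*\delta_0$ tight in $H$, and any weak limit is a stationary measure, which settles the existence part.

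For uniqueness and mixing, it suffices, by a standard reduction via the dual--Lipschitz metric, to construct for any $u,u'\in H$ a coupling $(u_k,u_k')$ of the chains issued from $u$ and $u'$ such that
\[
\E\,\|u_k-u_k'\|\le C\bigl(1+\|u\|+\|u'\|\bigr)\,e^{-\gamma k}, \qquad k\ge0,
\]
which, combined with dissipativity, implies~\eqref{2.10}. The coupling is built in two phases that are iterated over blocks of fixed length. In Phase~I the two chains are driven by the \emph{same} realisation of the noise, and the approximate controllability~(H2) combined with the non-degeneracy from~(H1) (full support $\KK$ and smooth density of the projections onto the first modes) is used to show that, starting from $B_H(R)\times B_H(R)$, the pair $(u_k,u_k')$ enters $B_H(\hat u,\e)\times B_H(\hat u,\e)$ within $l=l(R,\e)$ steps with probability bounded below by some $p_1>0$.

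In Phase~II, once $(u_k,u_k')$ lies in $B_H(\hat u,d)\times B_H(\hat u,d)$ with $d$ the constant provided by Theorem~A, we invoke the stabilisation theorem to obtain a deterministic control $\eta^*=\eta^*(u_k,u_k',h)$, Lipschitz in its arguments by part~(ii), with $\eta^*(\cdot+j-1,\cdot)|_{J_1}\in E$ and $\|\eta^*\|_{L^2(Q_j)}\le Ce^{-\alpha j}\|u_k-u_k'\|$. We couple one trajectory with the noise $\eta_k$ and the other with the shifted noise $\eta_k-\eta^*$: by~\eqref{1.5} the difference then decays exponentially, but the shifted process is no longer distributed as the Markov chain, so we must control the total variation between the laws of $\eta_k$ and $\eta_k-\eta^*$. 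This is where hypothesis~(H1) enters, through a general estimate on the TV distance between a measure on a Hilbert space and its image under a smooth, essentially finite--dimensional perturbation; the exponential decay of $\|\eta^*\|_{L^2(Q_j)}$ makes these distances summable in $j$, so a maximal coupling succeeds with probability bounded below, and the trajectories remain $e^{-\alpha j}$--close thereafter.

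The main obstacle is the matching in Phase~II: the subspace $E$ produced by Theorem~A lies in $H_0^1(Q,\R^2)$ but is not a priori spanned by finitely many of the $\psi_j$, so to realise $-\eta^*$ as an admissible perturbation of $\eta_k$ one must first approximate the control by its projection onto $\mathrm{span}(\psi_1,\dots,\psi_N)$ and absorb the projection error in the exponential bound. This forces the choice of $N$ in the theorem to depend on $\|h\|_{H^1(D_1)}$, $B$, and~$\nu$, and requires a quantitative image--measure estimate in the $N$--dimensional subspace together with a bound showing that the infinite-dimensional tail of $\eta^*$ is negligible. Once Phases~I and~II are in place, a geometric--trials argument (restart the coupling whenever Phase~II fails) converts the positive per-block success probability into the announced exponential rate, and uniqueness of~$\mu$ follows as a corollary.
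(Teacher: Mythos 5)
Your overall strategy (controllability plus an image-of-measure estimate in total variation, assembled by coupling) is the same as the paper's, but two of your steps have real problems. The most serious is Phase~I: you drive both chains with the \emph{same} realisation of the noise and claim that (H2) gives a uniform probability $p_1>0$ that the pair enters $B_H(\hat u,\e)\times B_H(\hat u,\e)$ within $l$ steps. Hypothesis (H2) only provides, for each initial point $v$, a control sequence $\zeta_1^v,\dots,\zeta_l^v\in\KK$ steering \emph{that} point near $\hat u$; the sequences for $u$ and $u'$ are in general different, and a single shared noise realisation cannot be $\delta$-close to both of them. So the event you need is not covered by (H1)+(H2), and its probability has no reason to be positive. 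The standard fix -- and what the paper does in its recurrence step -- is to run the two components with \emph{independent} copies of the noise while they are far apart, so that each is steered near $\hat u$ independently and the success probability is bounded below by $P(\ell,d)^2$, with the uniformity over the compact absorbing set coming from the lower semicontinuity of $\xi\mapsto\IP\{\|\zeta_1-\xi\|<\delta\}$ on $\KK$.

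Second, your ``main obstacle'' in Phase~II is misdiagnosed, and your proposed remedy is itself a gap. The control need not be produced in an arbitrary subspace $E\subset H_0^1(Q,\R^2)$ and then projected onto ${\rm span}(\psi_1,\dots,\psi_N)$: there is no quantitative relation between a generic $E$ and the $\psi_j$ that would make the projection error small, so ``absorb the error in the exponential bound'' is not justified. The correct route (Theorem~\ref{t3.1} in the paper) is to construct the one-step stabilising control \emph{directly} with values in $\EE_m={\rm span}(\psi_1,\dots,\psi_m)$, via a minimisation problem whose admissible controls are of the form $\chi({\mathsf P}_m\zeta)$ and a truncated observability inequality; the integer $N=m$ in the statement then comes from this construction, not from an approximation step. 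Finally, note that coupling the whole future by shifting the second trajectory's noise by $-\eta^*$ and invoking a maximal coupling blockwise leaves you with a non-Markov pair after a failed matching, so the ``restart whenever Phase~II fails'' geometric-trials argument needs care: either re-select a measurable (near-)optimal coupling of the one-step transition kernels at every step, so that the pair process is a genuine Markov chain on $X\times X$ (this is the paper's coupling operator together with the abstract recurrence/squeezing criterion), or set up an explicit regeneration structure; as written, the conditioning on failure events is not handled.
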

Note that condition~\eqref{2.9} expresses the space-time non-degeneracy of the noise. Thus, the property of exponential mixing holds true even for noises whose space-time dimension is finite. 

The general scheme of the proof of Theorem~\ref{t2.1} is outlined in Section~\ref{s2.2}, and the details are given in Sections~\ref{s3} and~\ref{s4}. Here we consider two  examples for which Condition~(H2) is fulfilled. 

\begin{example} \label{e2.2}
We claim that there is $\delta>0$ such that if the $1$-periodic function~$h$ satisfies the inequality  $\|h\|_{L^2(D_1)}\le\delta$, then Condition~(H2) is fulfilled, provided that~$\KK$ contains the zero element. 

Indeed, if $\|h\|_{L^2(D_1)}$ is sufficiently small, then problem~\eqref{1.1}, \eqref{1.2} with $f=h$ has a unique solution~$\tilde u(t,x)$ defined throughout the real line and $1$-periodic in time. To see this, it suffices to take a sequence~$\{u_n\}$ of solutions for the problem in question such that $u_n(-n)=0$ and to prove that it converges as $n\to\infty$ in the space $W([-N,N],V,V^*)$ for any $N>0$. The limiting function~$\tilde u$ is the required $1$-periodic solution. Using standard estimates for Navier--Stokes equations, it is easy to prove that 
$$
\|\tilde u\|_{W(J_1,V,V^*)}\le c(\delta)\to0\quad\mbox{as $\delta\to0$}.
$$
This implies that~$\tilde u$ is globally exponentially  stable as $t\to+\infty$. Therefore, for any positive constants~$R$ and~$\e$ one can find an integer~$l\ge1$ such that~\eqref{2.8} holds with $\zeta_1=\cdots=\zeta_l=0$ and $\hat u=\tilde u(0)$.  Since~$\KK$ contains the zero element, we see that Condition~(H2) is satisfied. 
\end{example}

\begin{example}
Suppose that $h$ is represented in the form $h(t,x)=h_0(\lambda t,x)$, where $h_0$ is a continuous $1$-periodic function of time  with range in~$V$ such that $\int_0^1h_0(s)\,ds=0$. We claim that, for sufficiently large integers $\lambda>0$, Condition~(H2) is satisfied, provided that~$\KK$ contains the zero element. 

Indeed, let us represent a solution of Eq.~\eqref{1.1} with $f=h$ in the form $u=w+g$, where $g(t)=\int_0^th(s)\,ds$. Then the function~$w$ must satisfy the equations
\begin{equation}\label{2.1}
\dot w+\langle w+g, \nabla\rangle (w+g)-\nu\Delta w+\nabla p=-\Delta g, \quad \diver w=0.
\end{equation}
The condition imposed on~$h$ imply that~$g$ is a $V$-valued $1$-periodic function such that
$$
\sup_{t\in\R}\|g(t)\|_V\to0\quad\mbox{as $\lambda\to\infty$}. 
$$
Combining this with an argument similar to that used in Example~\ref{e2.2}, one can  prove that Eq.~\eqref{2.1} has a unique $1$-periodic solution~$\widetilde w$, which is globally exponentially stable as $t\to\infty$. It follows that $\tilde u=\widetilde w+g$ is a globally exponentially stable solution for problem~\eqref{1.1}, \eqref{1.2} with $f=h$. As in Example~\ref{e2.2}, we conclude that the function~$\hat u=\tilde u(0)$ satisfies the required property. 
\end{example}

\subsection{General criterion for mixing and application}
\label{s2.2}
In this section, we outline the proof of Theorem~\ref{t2.1}, which is based on two key ingredients: a coupling approach developed in~\cite{KS-CMP2001,mattingly-2002,KPS-CMP2002,MY-2002,hairer-2002,shirikyan-JMFM2004} in the context of stochastic PDE's and a property of stabilisation to a non-stationary solution of Navier--Stokes equations~\cite{BRS-2010}. We first recall an abstract result established in~\cite{shirikyan-2008}.

Let $X$ be a compact metric space with a metric~$d_X$ and let~$(u_k,\IP_u)$, $u\in X$, be a family of Markov chains in~$X$. We denote by $P_k(u,\Gamma)$ its transition function and by~$\PPPP_k$ and~$\PPPP_k^*$ the corresponding Markov semigroups.  Let~$(\uuu_k,\IP_\uuu)$ be another family of Markov chains in the extended phase space $\XXX=X\times X$ such that 
\begin{equation} \label{2.21}
\Pi_*\PPP_k(\uuu,\cdot)=P_k(u,\cdot), \quad
\Pi_*'\PPP_k(\uuu,\cdot)=P_k(u',\cdot)\quad
\mbox{for $\uuu=(u,u')\in\XXX$, $k\ge0$},
\end{equation}
where $\PPP_k(\uuu,\GGamma)$ denotes the transition function for~$(\uuu_k,\IP_\uuu)$ and $\Pi,\Pi':\XXX\to X$ stand for the natural projections to the components of a vector $\uuu=(u,u')$. In other words, relations~\eqref{2.21} mean that, for any integer~$k\ge1$, the random variable $\uuu_k$ considered under the law~$\IP_\uuu$ with $\uuu=(u,u')$ is a coupling for the pair of measures $(P_k(u,\cdot),P_k(u',\cdot))$. 
We shall say that~$(\uuu_k,\IP_\uuu)$ satisfies the {\it mixing hypothesis\/} if there is a closed subset $\BBB\subset \XXX$ and positive constants~$C$ and~$\beta$ such that the following properties hold. 

\begin{description}
\item[Recurrence.]
Let $\ttau(\BBB)$ be the first hitting time of the set~$\BBB$:
$$
\ttau(\BBB)=\min\{k\ge0:\uuu_k\in\BBB\}.
$$
Then $\ttau(\BBB)$ is $\IP_\uuu$-almost surely finite for any $\uuu\in\XXX$, and there are positive constants~$C_1$ and~$\delta_1$ such that
\begin{equation} \label{2.22}
\E_\uuu \exp\bigl(\delta_1\ttau(\BBB)\bigr)\le C_1 
\quad \mbox{for $\uuu\in\XXX$}.
\end{equation}

\item[Exponential squeezing.]
Let us set
\begin{equation} \label{2.23}
\sigma=\min\{k\ge0:d_X(u_k,u_k')>C\,e^{-\beta k}\}. 
\end{equation}
Then there are positive constants $C_2$, $\delta_2$, and~$\delta_3$ such that, for any $\uuu\in\BBB$, we have
\begin{align}
\IP_\uuu\{\sigma=\infty\}&\ge\delta_3,\label{2.24}\\
\E_\uuu\bigl(I_{\{\sigma<\infty\}}\exp(\delta_2\sigma)\bigr)&\le C_2.\label{2.25}
\end{align}
\end{description}
The following proposition is a particular case of a more general result established\footnote{In~\cite{shirikyan-2008}, the proof is carried out in the particular case when $\BBB=B\times B$; however, the same argument applies in the general situation. } in~\cite{shirikyan-2008} (see Theorem~2.3). 

\begin{proposition} \label{p2.2}
Let $(u_k,\IP_u)$ be a family of Markov chains for which there exists another Markov family $(\uuu_k,\IP_\uuu)$ in the extended space~$\XXX$ that satisfies relation~\eqref{2.21} and the mixing hypothesis. Then~$(u_k,\IP_u)$ has a unique stationary distribution $\mu\in\PP(X)$, and there are positive constants~$C$ and~$\gamma$ such that 
\begin{equation} \label{2.26}
\|P_k(u,\cdot)-\mu\|_L^*\le C\,e^{-\gamma k}\quad\mbox{for $u\in X$, $k\ge0$}.
\end{equation}
\end{proposition}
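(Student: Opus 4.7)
The plan is to implement the standard coupling--renewal scheme: iterate the recurrence--squeezing cycle, and use the strong Markov property for $(\uuu_k,\IP_\uuu)$ to construct a random time $K$ with finite exponential moments after which the two marginal chains stay $Ce^{-\beta k}$-close forever. A split-the-expectation argument then yields a uniform exponential contraction for $\|P_k(u,\cdot)-P_k(u',\cdot)\|_L^*$, from which both the existence and uniqueness of $\mu$ and the estimate \eqref{2.26} follow by the Cauchy-sequence criterion in the dual-Lipschitz metric.

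Concretely, I would inductively define stopping times $0\le\tau_0\le\rho_0\le\tau_1\le\rho_1\le\cdots$ as follows. Set $\tau_0=\ttau(\BBB)$; on $\{\tau_n<\infty\}$, let $\rho_n=\tau_n+\sigma\circ\theta_{\tau_n}$, with the convention $\rho_n=\infty$ on the squeezing-success event $\{\sigma\circ\theta_{\tau_n}=\infty\}$; on $\{\rho_n<\infty\}$, set $\tau_{n+1}=\rho_n+\ttau(\BBB)\circ\theta_{\rho_n}$. Let $T=\inf\{n\ge0:\rho_n=\infty\}$ and $K=\tau_T$. By \eqref{2.24} and the strong Markov property (applied at each $\tau_n$, which by construction satisfies $\uuu_{\tau_n}\in\BBB$), the conditional probability of failure at each attempt is at most $1-\delta_3$, so $\IP_\uuu\{T>n\}\le(1-\delta_3)^{n+1}$. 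By \eqref{2.22} and \eqref{2.25}, the increment $\tau_{n+1}-\rho_n$ has exponential moment bounded by $C_1$ (conditionally on $\FF_{\rho_n}$), and $I_{\{\rho_n<\infty\}}e^{\delta_2(\rho_n-\tau_n)}$ has conditional expectation at most $C_2$ given $\FF_{\tau_n}$. Combining these bounds via Hölder's inequality with the geometric tail of $T$ and iterating the tower property produces positive constants $C_0,\delta_0$, independent of $\uuu$, such that $\E_\uuu\exp(\delta_0 K)\le C_0$.

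To convert this into the dual-Lipschitz bound, fix $\uuu=(u,u')\in\XXX$ and $F\in L_b(X)$ with $\|F\|_L\le1$. Since $X$ is compact, $|F|\le M$ for some constant $M=M(X)$, and on the event $\{T=n\}$ one has $K=\tau_n<\infty$ and $d_X(u_k,u_k')\le Ce^{-\beta(k-K)}$ for all $k\ge K$. Splitting at the event $\{K\le k/2\}$ and using \eqref{2.21} to identify the marginals yields
\begin{align*}
|\PPPP_kF(u)-\PPPP_kF(u')|
&\le \E_\uuu d_X(u_k,u_k')\\
&\le Ce^{-\beta k/2}+2M\,\IP_\uuu\{K>k/2\}\\
&\le Ce^{-\beta k/2}+2MC_0\exp(-\delta_0k/2).
\end{align*}
Taking the supremum over admissible $F$ gives $\|P_k(u,\cdot)-P_k(u',\cdot)\|_L^*\le C'e^{-\gamma k}$ with $\gamma=\tfrac12\min(\beta,\delta_0)$, uniformly in $u,u'\in X$. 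This contraction implies that $\{\PPPP_k^*\lambda\}_k$ is Cauchy in the dual-Lipschitz metric for any $\lambda\in\PP(X)$; the common limit is a fixed point of each $\PPPP_k^*$ and hence the unique stationary measure $\mu$. Applying the contraction to the pair $(u,u')$ with $u'\sim\mu$ delivers \eqref{2.26}.

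The main obstacle will be the bookkeeping behind the exponential moment of $K$: since the time $K$ is a random sum of a geometrically distributed number $T$ of increments, one cannot simply multiply exponential moments. The fix is to choose $\delta_0$ sufficiently small and apply Hölder's inequality with conjugate exponents $p=\delta_2/\delta_0$ (and similarly $\delta_1/\delta_0$) so that each factor $\E[I_{\{\rho_n<\infty\}}e^{\delta_0(\rho_n-\tau_n)}\mid\FF_{\tau_n}]\le C_2^{1/p}(1-\delta_3)^{1-1/p}$ tends to $1-\delta_3<1$ as $\delta_0\to0$, and then combine with the recurrence factor~$C_1$ for the next leg to produce a geometric series whose common ratio is strictly less than~$1$. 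Apart from this careful calibration of constants, the rest of the argument is a routine application of the strong Markov property and the Kantorovich--Rubinstein duality.
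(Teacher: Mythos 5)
Your proof is correct, and it is in substance the argument the paper itself does not reproduce: Proposition~\ref{p2.2} is obtained there simply by invoking Theorem~2.3 of~\cite{shirikyan-2008} (with a footnote that the proof given there for $\BBB=B\times B$ works verbatim in the general case), and the proof in that reference is precisely the coupling--renewal scheme you describe --- alternating recurrence and squeezing attempts at stopping times $\tau_n\le\rho_n\le\tau_{n+1}$, a geometric tail for the number $T$ of failed squeezing attempts via \eqref{2.24}, a uniform exponential moment $\E_\uuu e^{\delta_0 K}\le C_0$ for the regeneration time $K=\tau_T$ obtained by calibrating a small exponent $\delta_0$ against $\delta_1,\delta_2$ through H\"older's inequality, and then the split at $\{K\le k/2\}$ giving the uniform contraction of $\|P_k(u,\cdot)-P_k(u',\cdot)\|_L^*$, from which existence, uniqueness and \eqref{2.26} follow by completeness of $\PP(X)$ in the dual-Lipschitz metric. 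You correctly identify the only delicate point (one cannot multiply exponential moments along a random number of legs, hence the choice of $\delta_0$ small and the H\"older interpolation making the per-cycle factor strictly less than one). Two cosmetic remarks: since $\|F\|_L\le1$ already forces $\|F\|_\infty\le1$, you may take $M=1$ without appealing to compactness; and once you have passed to $\E_\uuu\, d_X(u_k,u_k')$ you should bound the integrand on $\{K>k/2\}$ by $\operatorname{diam}X$ rather than $2M$ (or split before applying the Lipschitz bound). Finally, your use of shifts and the strong Markov property at $\tau_n,\rho_n$ presupposes the canonical construction of the extension $(\uuu_k,\IP_\uuu)$, which is exactly how the paper builds it in \eqref{2.29}, so this is legitimate.
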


To prove Theorem~\ref{t2.1}, we first observe that the Markov family~$(u_k,\IP_u)$ possesses a compact absorbing invariant set~$X\subset H$, and it suffices to study its restriction to~$X$, for which we retain the same notation. 
We shall prove that~$(u_k,\IP_u)$  satisfies the hypotheses of Proposition~\ref{p2.2}. A crucial point of our construction is the following result, which says, roughly speaking, that if two points $u,u'\in X$ are sufficiently close, then the pair $(P_1(u,\cdot),P_1(u',\cdot))$ admits a coupling whose components are close with high probability; cf.\ Lemma~3.3 in~\cite{KS-CMP2001}.  

\begin{proposition} \label{p2.3}
Under the hypotheses of Theorem~\ref{t2.1}, there exists a constant $d>0$ such that for any points $u,u'\in X$ satisfying the inequality $\|u-u'\|\le d$ the pair $(P_1(u,\cdot),P_1(u',\cdot))$ admits a coupling $(V(u,u'),V'(u,u'))$ such that 
\begin{equation} \label{2.27}
\IP\bigl\{\|V(u,u')-V'(u,u')\|>\tfrac12\|u-u'\|\bigr\}\le C\,\|u-u'\|,
\end{equation}
where $C>0$ is a constant not depending on $u,u'\in X$. 
\end{proposition}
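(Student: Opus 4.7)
The plan is to construct the coupling $(V(u,u'),V'(u,u'))$ as $V=S(u,h+\eta_1)$ and $V'=S(u',h+\eta_1')$, where $(\eta_1,\eta_1')$ is itself a coupling of two copies of the law $\mu$ of $\eta_1$, arranged so that the second realisation is a stabilising perturbation of the first on an event of high probability. The bad event on which $V$ and $V'$ fail to contract then reduces to the exceptional event in the noise coupling.

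First I would invoke Theorem~A. Choose $\rho$ so that $X\subset B_H(\rho)$ and $\|h+\zeta\|_{H^1(D_1)}\le\rho$ for every $\zeta\in\KK$ (possible since $X$ and $\KK$ are compact in $H$ and in $H_0^1(Q,\R^2)$ respectively), and choose $\alpha$ so that the constant $Ce^{-\alpha}$ in~\eqref{1.5} is at most $\tfrac12$. Theorem~A then provides a finite-dimensional subspace $E\subset H_0^1(Q,\R^2)$ and a threshold $d>0$ such that, for every $u,u'\in X$ with $\|u-u'\|\le d$ and every $\zeta\in\KK$, there is $\eta^*(u,u',\zeta)\in E$ satisfying
$$
\|S(u',h+\zeta+\eta^*)-S(u,h+\zeta)\|\le\tfrac12\|u-u'\|,\qquad \|\eta^*\|_{L^2(Q)}\le C\|u-u'\|,
$$
and depending in a Lipschitz manner on $\zeta$. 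The integer $N$ from Theorem~\ref{t2.1} will be chosen large enough that $E\subset\mathrm{span}(\psi_1,\dots,\psi_N)$; together with the linear independence of the $\{\psi_j\}$ and the hypothesis $b_j\neq0$ for $j\le N$, this guarantees that $\eta^*$ lives in a direction in which the law of $\eta_1$ is non-degenerate.

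Next I would couple the two noises. Decompose $\eta_1=\eta_1^{(N)}+\eta_1^{(\perp)}$ with $\eta_1^{(N)}=\sum_{j=1}^N b_j\xi_{j,1}\psi_j$ and $\eta_1^{(\perp)}=\sum_{j>N}b_j\xi_{j,1}\psi_j$; by~(H1) these components are independent, and the law $\mu_N$ of $\eta_1^{(N)}$ is the image under the linear injection $(y_1,\dots,y_N)\mapsto\sum_j b_jy_j\psi_j$ of a product measure with $C^1$ densities $\rho_1\otimes\cdots\otimes\rho_N$ on $[-1,1]^N$. Set $\eta_1'^{(\perp)}=\eta_1^{(\perp)}$ and, conditionally on $\eta_1^{(\perp)}$, view the shift $\eta^{(N)}\mapsto\eta^{(N)}+\eta^*(u,u',\eta^{(N)}+\eta_1^{(\perp)})$ as a Lipschitz map $\phi$ of $\R^N$ into itself with $\|\phi-\mathrm{id}\|_\infty\le C\|u-u'\|$. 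The auxiliary estimate on the total-variation distance between a measure and its image under a smooth finite-dimensional mapping, applied to the smooth density of $\mu_N$, should yield
$$
\|\mu_N-\phi_*\mu_N\|_{\mathrm{var}}\le C'\|u-u'\|
$$
uniformly in $\eta_1^{(\perp)}$. A rejection-type maximal coupling (accepting the candidate $\phi(\eta_1^{(N)})$ with the appropriate Radon--Nikodym weight, thereby avoiding any need to invert $\phi$) then produces $(\eta_1^{(N)},\eta_1'^{(N)})$ with both marginals equal to $\mu_N$ and $\eta_1'^{(N)}=\phi(\eta_1^{(N)})$ on an event of probability at least $1-C'\|u-u'\|$. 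Reassembling gives two copies of $\mu$; on the good event Theorem~A delivers $\|V-V'\|\le\tfrac12\|u-u'\|$, which is~\eqref{2.27}.

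The main obstacle is the total-variation estimate $\|\mu_N-\phi_*\mu_N\|_{\mathrm{var}}\le C'\|u-u'\|$. Because $\eta^*$ depends on the noise $\zeta$ itself, $\phi$ is a genuinely nonlinear perturbation of the identity rather than a pure translation, and its Jacobian need not be close to $I$ in operator norm (Theorem~A(ii) controls only the size of $\eta^*$, not its smallness as a derivative in $\zeta$); a plain triangle-inequality comparison between translated densities is therefore not enough. Extracting the linear-in-$\|u-u'\|$ dependence from the $C^1$ regularity of the $\rho_j$ and the $L^\infty$-smallness of $\phi-\mathrm{id}$ is exactly the role of the image-of-measure result and the optimal-transport input mentioned in the abstract. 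Verifying additionally that $N$ can be chosen depending only on $\|h\|_{H^1(D_1)}$, $B$, and $\nu$, and that all quantitative bounds pass cleanly through the rejection coupling and the Lipschitz conclusion of Theorem~A(ii), is technical but routine.
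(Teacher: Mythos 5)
Your overall strategy is the paper's own: perturb the noise by the stabilising control, bound the total variation distance between the noise law and its image under this finite-dimensional perturbation, and convert that bound into a coupling of $\bigl(P_1(u,\cdot),P_1(u',\cdot)\bigr)$. (The paper performs the last step with a measurable $\e$-optimal coupling and Kantorovich duality, Propositions~\ref{p5.4} and~\ref{p5.3}, rather than your rejection coupling on the noise space; that difference is inessential.) The genuine gap is precisely the obstacle you flag and then set aside. Proposition~\ref{p5.7} requires \emph{both} inequalities in~\eqref{5.26}: the perturbation must be bounded by~$\varkappa$ \emph{and} $\varkappa$-Lipschitz, because the Jacobian factor $\det\bigl(I+D\varPhi\bigr)^{-1}$ has to be $1+O(\varkappa)$. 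The $L^\infty$-smallness of $\phi-\mathrm{id}$ together with $C^1$ densities is not sufficient: a perturbation of size $\varkappa$ that oscillates on scale $\varkappa$ (hence has Lipschitz constant of order one) produces a Jacobian bounded away from~$1$ on a set of full measure, and then $\|\mu_N-\phi_*\mu_N\|_{\rm var}$ is of order one, not $O(\|u-u'\|)$. So the image-of-measure result does not ``extract'' the linear dependence for you; you need a separate input guaranteeing that the derivative of $\zeta\mapsto\eta^*(u,u',\zeta)$ is itself of size $O(\|u-u'\|)$, and Theorem~A does not provide this: its Lipschitz constant in $(\hat u_0,u_0,h)$ is not proportional to $\|u_0-\hat u_0\|$.

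The missing input is Theorem~\ref{t3.1}, which you should use instead of Theorem~A throughout. There the control is the \emph{linear} operator $\varPhi(h+\eta,u)\in\LL(H,\EE_m)$ applied to $u-u'$, and $(h,\hat u_0)\mapsto\varPhi(h,\hat u_0)$ is $C$-Lipschitz in the operator norm, see~\eqref{3.3}; consequently the noise perturbation $\eta\mapsto\varPhi(h+\eta,u)(u-u')$ is both bounded by $C'\|u-u'\|$ and Lipschitz in~$\eta$ with constant $C\|u-u'\|$, so Proposition~\ref{p5.7} applies with $\varkappa$ proportional to $\|u-u'\|$ (after inserting a cut-off in $\|h+\eta\|_1$ to make the map globally defined, as in Section~\ref{s4.2}). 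Using Theorem~\ref{t3.1} also removes two smaller defects of your reduction: the control there takes values in $\EE_m=\mathrm{span}(\psi_1,\dots,\psi_m)$ by construction, so you do not need the unsupported claim $E\subset\mathrm{span}(\psi_1,\dots,\psi_N)$ (Theorem~A says nothing about the position of~$E$ relative to the~$\psi_j$), and the one-step contraction with factor $q=\tfrac14$ is available directly, avoiding the slightly circular tuning ``choose $\alpha$ so that $Ce^{-\alpha}\le\tfrac12$'' when $C$ itself depends on~$\alpha$. With these replacements your argument closes and essentially coincides with the proof given in Section~\ref{s4.2}.
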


The proof of this proposition is based on a controllability property for the Navier--Stokes system and application of a concept of optimal coupling; see Sections~\ref{s3} and~\ref{s5.1}. We now define a {\it coupling operator\/}~$\RRR=(\RR,\RR')$ by the relation
\begin{equation} \label{2.28}
\RRR(u,u';\omega)=\left\{
\begin{array}{cl}
\bigl(V(u,u'),V'(u,u')\bigr)&\mbox{for $\|u-u'\|\le d$},\\[3pt]
\bigl(S(u,\zeta),S(u',\zeta')\bigr)&\mbox{for $\|u-u'\|> d$},
\end{array}
\right.
\end{equation}
where $\zeta$ and~$\zeta'$ are independent random variables whose law coincides with that of~$\eta_1$. Without loss of generality, we can assume that~$\zeta$ and~$\zeta'$ are defined on the same probability space as~$V$ and~$V'$. The required Markov family~$(\uuu_k,\IP_\uuu)$ is constructed by iterations of~$\RRR$. Namely, let~$(\Omega_k,\FF_k,\IP_k)$, $k\ge1$, be countably many copies of the probability space on which~$\RRR$ is defined and let $(\OOmega,\FFF,\IIP)$ be the direct product of these spaces. We set 
\begin{equation} \label{2.29}
\uuu_0=(u,u'), \quad \uuu_k=\RRR(\uuu_{k-1},\omega_k), \quad k\ge1. 
\end{equation}
The recurrence property will follows from approximate controllability (see Hypothesis~(H2) in Section~\ref{s2.1}), while the exponential squeezing will be implied by Proposition~\ref{p2.3}. 

\section{Control problem}
\label{s3}
\subsection{Squeezing}
\label{s3.1}
In this section, we consider the controlled Navier--Stokes system~\eqref{1.1}--\eqref{1.3} on the time interval $J_1=[0,1]$. We assume that the function~$h$ belongs to the space $H^1(D_1)$ (where $D_1=J_1\times D$) and denote by~$\{\psi_j\}$ the sequence of functions entering Hypothesis~(H1) of Section~\ref{s2.1}. Extending the functions~$\psi_j$ by zero outside~$Q$, we may regard them as elements of $H_0^1(D_1)$. We denote by~$\EE_m$ the vector span of $\psi_1,\dots,\psi_m$ endowed with the~$L^2$ norm and by~$B_R$ the ball in~$H^1(D_1)$ of radius~$R$ centred at origin.

\begin{theorem} \label{t3.1}
Under the above hypotheses, for any $R>0$ and $q\in(0,1)$ there is an integer $m\ge1$, positive constants~$d$ and~$C$, and a continuous mapping 
$$
\varPhi:B_R\times B_H(R)\to \LL(H,\EE_m),\quad
(h,\hat u_0)\mapsto \eta, 
$$ 
such that the following properties hold.

\begin{description}
\item[Contraction.]
For any functions $h\in B_R$ and $\hat u_0, u_0\in B_H(R)$ satisfying the inequality
\begin{equation} \label{3.1}
\|u_0-\hat u_0\|\le d, 
\end{equation}
we have
\begin{equation} \label{3.2}
\bigl\|S\bigl(\hat u_0,h\bigr)
-S\bigl(u_0,h+\varPhi(h,\hat u_0)(u_0-\hat u_0)\bigr)\bigr\|
 \le q\,\|u_0-\hat u_0\|. 
\end{equation}
\item[Regularity.]
The mapping~$\varPhi$ is infinitely smooth in the Fr\'echet sense. 
\item[Lipschitz continuity.]
The mapping $\varPhi$ is Lipschitz continuous with the constant~$C$. That is, 
\begin{equation} \label{3.3}
\bigl\|\varPhi(h_1,\hat u_1)-\varPhi(h_2,\hat u_2)\bigr\|_{\LL}\le C\,\bigl(\|h_1-h_2\|_{H^1}+\|\hat u_1-\hat u_2\|\bigr),
\end{equation}
where $\|\cdot\|_\LL$ stands for the norm in the space $\LL(H,\EE_m)$. 
\end{description}
\end{theorem}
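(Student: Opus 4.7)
The plan is to linearise around the reference trajectory $\hat u(t):=u(t;\hat u_0,h)$ on $J_1$, solve a finite-dimensional linear feedback control problem that contracts the initial discrepancy to within a factor $q/4$, and then absorb the remaining quadratic error by a perturbative fixed-point argument. Writing $u(t)=u(t;u_0,h+\eta)$ and $v=u-\hat u$, the difference satisfies
\begin{equation*}
\dot v - \nu\Delta v + \langle \hat u,\nabla\rangle v + \langle v,\nabla\rangle \hat u + \langle v,\nabla\rangle v + \nabla p = \eta, \quad \diver v=0,
\end{equation*}
with $v(0)=v_0:=u_0-\hat u_0$ and $v|_{\p D}=0$. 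The aim is to choose $\eta\in\EE_m$ depending linearly on $v_0$ so that $\|v(1)\|\le q\|v_0\|$, and to verify that the resulting feedback $\varPhi:(h,\hat u_0)\mapsto(v_0\mapsto\eta)$ is smooth and Lipschitz.

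For the linear subproblem I drop $\langle v,\nabla\rangle v$ and introduce two bounded linear operators associated with the resulting linearised system: the free resolving operator $A=A(h,\hat u_0)\in\LL(H)$, sending $v_0$ to the solution $w(1)$ with $\eta=0$, and the control-to-state operator $\Phi=\Phi(h,\hat u_0)\in\LL(\EE_m,H)$, sending $\eta$ to $w(1)$ from zero initial data. Parabolic smoothing makes $A$ compact, while linear independence of $\{\psi_j\}$ together with backwards uniqueness for the linearisation make $\Phi$ injective. The key ingredient is the approximate controllability of the linearised system, namely $\overline{\bigcup_{m\ge1}\Phi_m(\EE_m)}=H$, which by duality reduces to a unique continuation property for the adjoint backward linearised Navier--Stokes equation (any solution vanishing on the space-time open set $Q$ has zero final value) and follows from Carleman estimates in the spirit of Fursikov--Imanuvilov. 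Using the compactness of $A$ and a continuity argument in $(h,\hat u_0)\in B_R\times B_H(R)$, I can then fix a single integer $m$ for which the orthogonal projection of $A(h,\hat u_0)$ onto $\Phi(\EE_m)^\perp$ has operator norm less than $q/4$ uniformly in the parameters. Setting
\begin{equation*}
\varPhi(h,\hat u_0):=-(\Phi^*\Phi)^{-1}\Phi^*A
\end{equation*}
defines a Moore--Penrose feedback satisfying $\|(A+\Phi\,\varPhi)v_0\|\le\tfrac{q}{4}\|v_0\|$ for every $v_0\in H$.

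To return to the nonlinear equation, decompose $v=w+z$ where $w$ is the linearised solution driven by the control $\eta=\varPhi(h,\hat u_0)v_0$ and $z$ absorbs the quadratic term $\langle v,\nabla\rangle v$. Standard 2D Navier--Stokes energy estimates and a Banach fixed point in a small ball of $\XX_1$ give $\|z(1)\|\le C_1\|v_0\|^2$ provided $\|v_0\|\le d$, whence
\begin{equation*}
\|v(1)\|\le \tfrac{q}{4}\|v_0\|+C_1\|v_0\|^2\le q\|v_0\|
\end{equation*}
for $d$ sufficiently small. The smoothness and Lipschitz continuity of $\varPhi$ follow from the smooth (in fact analytic) dependence of $A$ and $\Phi$ on $(h,\hat u_0)$ via the well-posedness theory for Navier--Stokes, combined with smoothness of matrix inversion on the finite-dimensional space $\EE_m$ on the open set where $\Phi^*\Phi$ is uniformly invertible. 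The main obstacle is precisely the uniform approximate controllability step: the density relies on a delicate unique continuation/Carleman estimate for the adjoint linearised Navier--Stokes system with space-time localised observation, and one must then control this density uniformly as $(h,\hat u_0)$ varies in $B_R\times B_H(R)$.
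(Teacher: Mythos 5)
Your overall architecture matches the paper's (linearise around $\hat u$, solve a finite-dimensional control problem for the linearised equation, absorb the quadratic remainder by choosing $d$ small), but the way you solve the linear step has a genuine gap exactly at its quantitative heart. You define $\varPhi=-(\Phi^*\Phi)^{-1}\Phi^*A$ and assert (i) that $\Phi^*\Phi$ is invertible (``backwards uniqueness plus linear independence of $\{\psi_j\}$ make $\Phi$ injective'') and (ii) that ``a continuity argument'' lets you fix one $m$ with $\|(I-P_{\mathrm{ran}\,\Phi})A\|<q/4$ uniformly on $B_R\times B_H(R)$. Neither is justified. Backward uniqueness concerns solutions of the homogeneous (or adjoint) equation; it says nothing about the kernel of the control-to-final-state map $\eta\mapsto w(1)$, and there is no general reason a nonzero $\eta\in\EE_m$ cannot steer $0$ to $0$. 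More importantly, even granting injectivity, your Lipschitz bound \eqref{3.3} and the control-norm bound needed for the perturbative step (the analogue of the second inequality in \eqref{3.7}) require a \emph{uniform} lower bound on $\Phi^*\Phi$ over the whole parameter ball; such a bound is an observability-type statement and is precisely what has not been proved. Likewise, $B_R\times B_H(R)$ is not compact in the norm topologies you use, so continuity of $(h,\hat u_0)\mapsto A,\Phi$ does not yield a single $m$ working uniformly; pointwise in the parameters the compactness of $A$ gives you an $m$, but uniformity is the hard point.

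The paper avoids both difficulties: it does not invert anything, but minimises the penalised functional $J(w,\zeta)=\frac12\int_0^1\|\zeta\|^2dt+\frac1\delta\|\Pi_Nw(1)\|^2$ (so only the low modes $\Pi_Nw(1)$ are targeted, the high modes being handled by the Poincar\'e inequality and parabolic regularisation), and it extracts the quantitative bound \eqref{3.10} from the Kuhn--Tucker optimality system together with the truncated observability inequality of Proposition~\ref{p5.5}. That inequality, proved by a compactness/contradiction argument using the uniform bound $\hat u\in B_{\YY_{\delta,1}}(\rho)$ coming from smoothing, is exactly where the uniformity in $(h,\hat u_0)$ and the choice of $m$ are obtained, and the smoothness/Lipschitz properties of $\varPhi$ then follow from Proposition~\ref{p5.6} without any invertibility hypothesis. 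To repair your argument you would need to prove a uniform coercivity estimate $\|\Phi\eta\|\ge c\|\eta\|$ on $\EE_m$ (or pass, as the paper does, to a penalised least-squares formulation) and replace the ``continuity argument'' by a genuine uniform observability statement; as written, the Moore--Penrose construction does not deliver the uniform constants $d$, $C$, $m$ claimed in the theorem.
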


Taking this result for granted, let us prove Theorem~A stated in the Introduction. 
Let us fix positive constants~$\rho$ and~$\alpha$ and take an initial function $\hat u_0\in B_H(\rho)$. The boundedness of the resolving operator for the Navier--Stokes system implies that the corresponding solution~$u$ satisfies the inequality
\begin{equation} \label{3.51}
\|u(t)\|\le R\quad\mbox{for all $t\ge0$},
\end{equation}
where~$R$ is a constant depending only on~$\rho$. Let $q\in(0,1)$ be such that $e^{-\alpha}=q$. Denote by~$d$, $C$, and~$m$ the parameters constructed in Theorem~\ref{t3.1}. Let us take any $u_0\in H$ satisfying the inequality~$\|u_0-\hat u_0\|\le d$ and construct a control function~$\eta$ consecutively on the intervals~$J_k$, $k\ge1$. To this end, we denote by~$h_k$ the restriction of the function $h(t-k+1)$ to~$D_1$ and set
$$
\eta(t)=\varPhi(\hat u_0,h_1)(u_0-\hat u_0)\quad\mbox{for $0\le t\le 1$}. 
$$
Then, by Theorem~\ref{t3.1}, we have 
$$
\|u(1)-\hat u(1)\|\le q\|u_0-\hat u_0\|=e^{-\alpha}\|u_0-\hat u_0\|,
$$
where $u(t)$ denotes the solution of~\eqref{1.1}--\eqref{1.3}, \eqref{1.05} on the interval~$J_1$. Assume we have constructed a control~$\eta$ on the interval~$(0,k)$ with $k\ge 1$, and the corresponding solution~$u$ satisfies the inequality
\begin{equation} \label{decay}
\|u(l)-\hat u(l)\|\le e^{-\alpha l}\|u_0-\hat u_0\|
\end{equation}
for $l=1,\dots, k$. In this case, setting
\begin{equation} \label{control}
\eta(k+t)=\varPhi(\hat u(k),h_{k+1})\bigl(u(k)-\hat u(k)\bigr)
\quad\mbox{for $0\le t\le 1$},
\end{equation}
we see that~\eqref{decay} remains valid for  $l=k+1$. Thus, we can construct~$\eta$ on the half-line~$\R_+$, and the corresponding solution will satisfy~\eqref{decay} for all integers $l\ge1$. Combining this with the Lipschitz continuity of the resolving operator of the Navier--Stokes system, we see that~\eqref{1.5} holds. Inequality~\eqref{1.6} is a straightforward consequence of~\eqref{control} and~\eqref{decay}. 
Finally, it is not difficult to check that if~$\eta^1$ and~$\eta^2$ are two controls corresponding to $(u_{0}^i,\hat u_{0}^i,h^i)$, $i=1,2$, then
$$
\bigl\|\eta_k^1-\eta_k^2\bigr\|_{L^2(D_k)}
\le C_1^k\Bigl(\|u_{0}^1-u_{0}^2\|+\|\hat u_{0}^1-\hat u_{0}^2\|
+\max_{1\le l\le k}\|h_{l}^1-h_{l}^2\|_{H^1(D_l)}\biggr),
$$
where $C_1>0$ depends only on~$\rho$ and~$\alpha$, $\eta_{l}^i$ stands for the restriction of~$\eta^i$ to~$D_l$, and~$h_l^i$ are defined in a similar way. This completes the proof of Theorem~A. 

\medskip
We now turn to the proof of Theorem~\ref{t3.1}. 
As was mentioned in the Introduction, a weaker version of this result was established in the paper~\cite{BRS-2010}, and its proof repeats essentially the argument used there. However, since the finite-dimensionality in time for the control and the  regularity and Lipschitz properties of~$\varPhi$ are important for the stochastic part of this work, we present a rather complete proof of Theorem~\ref{t3.1}. We begin with a description of the main steps and give the details in the next two subsections. 

\medskip
{\it Step~1: Reduction to the linearised problem}. 
Denote by~$\hat u(t,x)$ the solution of~\eqref{1.1}, \eqref{1.3} issued from~$\hat u_0$ and corresponding to~$\eta\equiv0$. In view of the regularising property of the Navier--Stokes system, for any interval $J=(\delta,1)$ with $\delta>0$ we have
\begin{equation} \label{reg}
\hat u\in L^2(J,H^3\cap V), \quad \p_t\hat u\in L^2(J,V),
\end{equation}
and the corresponding norms are bounded by a constant depending only on~$\delta$ and~$R$. In particular, the truncated observability inequality holds for the adjoint of the Navier--Stokes system linearised around~$\hat u$; see Section~\ref{s5.2}. 

A solution with a non-zero control~$\eta$ is sought in the form $u=\hat u+v$. Then~$v$ must be a solution of the problem
\begin{gather}
\dot v+\langle v,\nabla\rangle v+\langle \hat u,\nabla\rangle v
+\langle v,\nabla\rangle \hat u-\nu \Delta v+\nabla p=\eta(t,x), \quad \diver v=0,
\label{3.4}\\
v\bigr|_{\p D}=0, \quad v(0)=v_0, \label{3.5}
\end{gather}
where $v_0=u_0-\hat u_0$. Together with Eq.~\eqref{3.4}, consider its linearisation around zero:
\begin{equation}
\dot v+\langle \hat u,\nabla\rangle v
+\langle v,\nabla\rangle \hat u-\nu \Delta v+\nabla p=\eta(t,x), \quad \diver v=0.
\label{3.6}
\end{equation}
Suppose that we have constructed $\eta\in\EE_m$ such that the solution~$w(t,x)$ of~\eqref{3.6}, \eqref{3.5} satisfies the inequalities
\begin{equation} \label{3.7}
\|w(1)\|\le\frac{q}{2}\|v_0\|, \quad \|w\|_{\XX_1}\le C_1\|v_0\|. 
\end{equation}
A standard perturbative argument shows that if~$\|v_0\|$ is sufficiently small, then the solution of~\eqref{3.4}, \eqref{3.5} satisfies the inequality $\|v(1)\|\le q\|v_0\|$, whence it follows that~\eqref{3.2}  holds. Thus, it suffices to construct a continuous linear operator $\varPhi(h,\hat u_0):H\to\EE_m$ such that the solution $w\in\XX_1$ of problem~\eqref{3.6}, \eqref{3.5} with $\eta=\varPhi(h,\hat u_0)v_0$ satisfies inequalities~\eqref{3.7}. 

\smallskip
{\it Step~2: Application of the Foia\c s--Prodi property}. 
Let $\{e_j\}$ be an orthonormal basis in~$H$ formed of the eigenfunctions of the Stokes operator $L=-\Pi\Delta$, where~$\Pi$ stands for the Leray projection in~$L^2(D,\R^2)$ (onto the closed subspace~$H$), let~$\{\alpha_j\}$ be the corresponding (non-decreasing) sequence of eigenvalues for~$L$, and let $\Pi_N$ be the orthogonal projection in~$H$ on the vector space~$H_N$ spanned by  $e_1,\dots,e_N$. 
Denote by~$\RR^{\hat u}:H\times L^2(D_1)\to \XX_1$ a linear operator that takes~$(v_0,\eta)$ to the solution~$w$ of~\eqref{3.6}, \eqref{3.5} and by~$\RR_t^{\hat u}$ its restriction to the time~$t$. 

Suppose that for any integer $N\ge1$ and any~$\delta>0$ we have constructed an integer $m\ge1$ and a family of linear operators $\varPhi=\varPhi(h,\hat u_0):H\to\EE_m$ which is a Lipschitz function of its arguments and is such that
\begin{equation} \label{3.8}
\bigl\|{\Pi}_N\RR_1^{\hat u}(v_0,\varPhi(h,\hat u_0)v_0)\bigr\|\le C_2\delta\|v_0\|,\quad 
\|\varPhi(h,\hat u_0)\|_\LL\le C_2,
\end{equation}
where $C_2>0$ is a constant not depending on~$N$ and~$\delta$. In this case, the Poincaré inequality and the regularising property of~\eqref{3.6} imply that
\begin{align*}
\|\RR_1^{\hat u}(v_0,\varPhi(h,\hat u_0) v_0)\|
&=\|(I-\Pi_N)\RR_1^{\hat u}(v_0,\varPhi (h,\hat u_0)v_0)\|+C_2\delta\,\|v_0\|\\
&\le \alpha_{N+1}^{-1/2}\|\RR_1^{\hat u}(v_0,\varPhi(h,\hat u_0) v_0)\|_1
+C_2\delta\,\|v_0\|\\
&\le C_3\alpha_{N+1}^{-1/2}\bigl(\|v_0\|
+\|\varPhi (h,\hat u_0)v_0\|_{L^2(D_1)}\bigr)
+C_2\delta\,\|v_0\|\\
&\le \bigl(C_3(C_2+1)\alpha_{N+1}^{-1/2}+C_2\delta\bigr)\|v_0\|.
\end{align*}
Choosing~$N$ sufficiently large and $\delta$ sufficiently small, we obtain the first inequality in~\eqref{3.7}. The second is an immediate consequence of the continuity of~$\RR^{\hat u}$ and the boundedness of~$\varPhi$.  

\smallskip
{\it Step~3: Minimisation problem}. 
The construction of~$\varPhi$ is based on a study of a minimisation problem for solutions of~\eqref{3.6} with a cost functional penalising the term $\|\Pi_Nw(1)\|$. Namely, let us consider the following problem. 

\begin{problem} \label{p3.2}
Given a constant~$\delta>0$, an integer $N\ge1$, and functions $v_0\in H$ and $\hat u\in \XX_1$ satisfying~\eqref{reg}, minimise the functional
$$
J(w,\zeta)=\frac12\int_0^1\|\zeta(t)\|^2dt+\frac1\delta\|\Pi_Nw(1)\|^2
$$
over the set of functions $(w,\zeta)\in\XX_1\times L^2(D_1,\R^2)$ satisfying the equations
\begin{equation} \label{3.9}
\dot w+\langle \hat u,\nabla\rangle w
+\langle w,\nabla\rangle \hat u-\nu \Delta w+\nabla p=\chi({\mathsf P}_m\zeta), \quad \diver w=0, \quad w(0)=v_0,
\end{equation}
where $p=p(t,x)$ is a distribution in~$D_1$ and ${\mathsf P}_m$ stands for the orthogonal projection in~$L^2(D_1)$ onto~$\EE_m$. 
\end{problem}
We shall show that Problem~\ref{p3.2} has a unique solution $(w,\zeta)$, which satisfies the inequality
\begin{equation} \label{3.10}
\frac1\delta\|\Pi_Nw(1)\|^2+\|\zeta\|_{L^2(D_1)}^2\le C_3\|v_0\|^2,
\end{equation}
where $C_3>0$ is a constant not depending on~$N$ and~$\delta$. This will imply the required inequalities~\eqref{3.8}, in which $\varPhi v_0=\chi({\mathsf P}_m\zeta)$. Further analysis shows that the mapping $\hat u\mapsto \zeta$ is smooth from~$\XX_1$ to~$L^2(D_1)$ and uniformly Lipschitz continuous on bounded balls. Since~$\hat u$ is an analytic function of~$(h,\hat u_0)$, this will complete the proof of the theorem. 

\subsection{Minimisation problem}
{\it Step 1: Existence, uniqueness, and linearity.}
Let us prove that Problem~\ref{p3.2} has a unique optimal  solution $(w,\zeta)$ in the space $X:=\XX_1\times L^2(D_1)$. Indeed, the function~$J:X\to\R$ is non-negative and therefore has an infimum on any affine subspace of~$X$. Denote by~$X_{v_0}$ the affine subspace of~$X$ defined by~\eqref{3.9} and by~$J_*$ the infimum of~$J$ on~$X_{v_0}$. Let~$(w_n,\zeta_n)\in X_{v_0}$ be an arbitrary minimising sequence. Then $\{\zeta_n\}$ is a bounded sequence in~$L^2(D_1)$, and without loss of generality we can assume that it converges weakly to a limit~$\zeta$. It follows that the sequence of solutions~$w_n\in\XX_1$ of problem~\eqref{3.9} with $\zeta=\zeta_n$ converges in the space~$\XX_1$ to a limit~$w$, which satisfies~\eqref{3.9}. The lower semi-continuity of the norm of a Hilbert space now implies that 
$$
J(w,\zeta)\le \liminf_{n\to\infty}J(w_n,\zeta_n)=J_*.
$$
Recalling the definition of~$J_*$, we conclude that $J(w,\zeta)=J_*$. 

\smallskip
To prove the uniqueness, note that any affine subspace is a convex set in~$X$. Combining this property with the strict convexity of the norm of a Hilbert space, we see that if $(w_i,\zeta_i)$, $i=1,2$, are two optimal solutions, then $\zeta_1=\zeta_2$. Since the solution of problem~\eqref{3.9} with a given~$\zeta\in L^2(D_1)$ is unique, we conclude that $w_1=w_2$. 

\smallskip
Finally, it is a standard fact of the optimisation theory that the unique minimum of a quadratic functional under a linear constraint can be expressed as a linear function of the problem data. In the case under study, the corresponding operator depends on the reference solution~$\hat u$. We shall denote by $\varPsi(\hat u)$ the linear operator that takes~$v_0$ to~$\zeta$, where $(w,\zeta)$ is the optimal solution for Problem~\ref{p3.2}.  

\smallskip
{\it Step 2: Regularity and Lipschitz continuity.}
We now prove that the mapping~$\varPsi$  regarded as an application from~$H$ to~$\LL(H,L^2(D_1))$ is infinitely differentiable and uniformly Lipschitz continuous on balls. Let us denote by $w_0=w_0(\hat u,v_0)\in\XX_1$ the solution of problem~\eqref{3.9} with $\zeta\equiv0$. The linear constraint of Problem~\ref{p3.2} is equivalent to the relation 
\begin{equation} \label{3.15}
w=w_0+\RR^{\hat u}(0,\chi({\mathsf P}_m\zeta)).
\end{equation}
Setting $A(\hat u)\zeta=\RR_1^{\hat u}(0,\chi({\mathsf P}_m\zeta))$, we see that~$(w,\zeta)\in X$ is a solution of Problem~\ref{p3.2} if and only if~$w$ is the global minimum of the function $F:L^2(D_1)\to\R$ defined by 
$$
F(\zeta;\hat u,v_0)=\frac12\int_0^1\|\zeta(t)\|^2dt
+\frac1\delta\,\bigl\|\Pi_N\bigl(w_0+A(\hat u)\zeta\bigl)\bigr\|^2. 
$$
Using standard methods of the theory of 2D Navier--Stokes equations, we can prove that  $A(\hat u)$ is an analytic function from~$\XX_1$ to~$\LL(H)$
which is uniformly Lipschitz continuous on bounded sets; e.g., see~\cite{kuksin-1982} or Chapter 1 in~\cite{VF}. It follows that~$F$ satisfies the hypotheses of Proposition~\ref{p5.6} in which $U=L^2(D_1)$ and $Y=\XX_1\times H$. Thus, the unique minimum~$\zeta$ of~$F$ is a smooth function of~$(\hat u,v_0)$ valued in~$L^2(D_1)$, and it is Lipschitz continuous on bounded subsets. Recalling relation~\eqref{3.15}, we conclude that the unique solution~$(w,\zeta)\in X$ of Problem~\ref{p3.2} is a smooth function on~$\XX_1\times H$ which is Lipschitz continuous on bounded subsets. Since~$(w,\zeta)$ linearly depends on~$v_0$, it is straightforward  to derive the required properties of~$\varPsi$. 

\smallskip
{\it Step 3: A priori estimate.}
From now on, we fix $R,\delta>0$ and assume that $\hat u\in B_{\YY_{\delta,1}}(R)$. We claim that there is an integer $m\ge1$ depending on~$R$ and~$N$, and a constant~$C>0$ depending only on~$R$ such that, if $\hat u\in B_{\YY_{\delta,1}}(R)$, then inequality~\eqref{3.10} holds. Indeed, note that the constraint given by~\eqref{3.9} is equivalent to the equations
\begin{equation} \label{3.11}
\dot w+\nu Lw+B(\hat u,w)+B(w,\hat u)=\Pi\bigl(\chi({\mathsf P}_m\zeta)\bigr),
\quad w(0)=v_0,
\end{equation}
where we set $B(u_1,u_2)=\Pi(\langle u_1,\nabla\rangle u_2)$. Thus, the pair $(w,\zeta)\in X$ constructed in Step~1 is the minimiser of~$J$ under constraint~\eqref{3.10}. Applying the Kuhn--Tucker theorem (see Chapter~I in~\cite{IT1979}), we can find functions $\lambda\in H$ and $\theta\in L^2(J_1,V)$ such that, for any $(r,\xi)\in X$, we have  
\begin{multline*}
\int_0^1(\zeta,\xi)\,dt+\frac2\delta\bigl(\Pi_Nw(1),r(1)\bigr)
+\bigl(\lambda,r(0)\bigr)\\
+\int_0^1\bigl(\theta,\dot r+\nu Lr+B(\hat u,r)+B(r,\hat u)-\chi({\mathsf P}_m\xi)\bigr)dt
=0.
\end{multline*}
It follows that 
\begin{gather}
\dot \theta-\nu L\theta-B^*(\hat u)\theta=0,\label{3.12}\\
\theta(1)=-\frac2\delta \Pi_Nw(1), \quad \zeta={\mathsf P}_m(\chi \theta),\label{3.13}
\end{gather}
where $B^*(\hat u)$ denotes the (formal) adjoint of the operator $B(\hat u,\cdot)+B(\cdot,\hat u)$ in~$H$, and Eq.~\eqref{3.12} holds in the sense of distributions. Multiplying Eq.~\eqref{3.12} by~$w$ and the first equation in~\eqref{3.11} by~$\theta$, adding together the resulting relations, and integrating over~$D_1$, we derive
\begin{equation} \label{3.14}
\frac2\delta \|\Pi_Nw(1)\|^2+\int_0^1\|\zeta(t)\|^2dt=-(\theta(0),v_0),
\end{equation}
where we used relations~\eqref{3.13} and the initial condition in~\eqref{3.11}. Now note that~\eqref{3.12} is equivalent to the backward Navier--Stokes equations~\eqref{5.12}. Therefore, by Proposition~\ref{p5.5}, the function~$\theta$ must satisfy the truncated observability inequality~\eqref{5.14}. Combining it with~\eqref{3.14}, we obtain~\eqref{3.10}. 
We have thus shown that (cf.~\eqref{3.8})
\begin{equation} \label{3.18}
\bigl\|\Pi_N\RR_1^{\hat u}
\bigl(v_0,\chi{\mathsf P}_m(\varPsi(\hat u)v_0)\bigr)\bigr\|\le C\delta \|v_0\|, 
\quad \|\varPsi(\hat u)v_0\|_{L^2(D_1)}\le C\|v_0\|. 
\end{equation}

\subsection{Proof of Theorem~\ref{t3.1}}
Let us define $\varPhi(h,\hat u_0)$ as the linear operator taking~$v_0$ to~$\chi({\mathsf P}_m\varPsi(\hat u)v_0)$, where $\hat u\in\XX_1$ is the solution of problem~\eqref{1.1}, \eqref{1.5} with $f=h$ and $u_0=\hat u_0$. Standard regularity results for 2D Navier--Stokes equations imply that $\hat u\in\YY_{\delta,1}$ for any $\delta\in(0,1)$ (see Theorem~3.5 in Chapter~III of~\cite{Te}), so that~$\varPsi(\hat u)$ is well defined. Moreover, the norm $\|\hat u\|_{\YY_{\delta,1}}$ is bounded by a constant depending only on~$R$ and~$\delta$. 
We claim that~$\varPhi(h,\hat u_0)$ satisfies the contraction property stated in the theorem. If this assertion is proved, then the regularity and Lipschitz continuity of~$\varPsi$ combined with similar properties  of the resolving operator for the 2D Navier--Stokes system will imply the remaining assertions on~$\varPhi$. 

\smallskip
Inequalities~\eqref{3.18} imply that the solution~$(w,\zeta)$ of Problem~\ref{p3.2} satisfies~\eqref{3.10}. Therefore, choosing~$N$ and $\delta^{-1}$ sufficiently large, we ensure that the function~$w=\RR^{\hat u}(v_0,\varPhi(h,\hat u_0)v_0)$ satisfies~\eqref{3.7}.  Let us represent a solution of the non-linear problem~\eqref{3.4}, \eqref{3.5} with the right-hand side $\eta=\varPhi(h,\hat u_0)(u_0-\hat u_0)$ in the form $v=w+z$. Then $z\in\XX_1$ must be a solution of the problem
\begin{gather}
\dot z+\langle z,\nabla\rangle z+\langle \hat u+w,\nabla\rangle z
+\langle z,\nabla\rangle (\hat u+w)-\nu \Delta z+\nabla p
=-\langle w,\nabla\rangle w,
\label{3.16}\\
\diver z=0, \quad z\bigr|_{\p D}=0, \quad z(0)=0. \label{3.17}
\end{gather}
Taking the scalar product of~\eqref{3.16} with~$2z$ in~$H$ and carrying out some standard transformations, we derive
$$
\p_t\|z\|^2+2\nu\|\nabla z\|^2
\le C_1\bigl(\|\hat u+w\|_1\|z\|+\|w\|_1\|w\|\bigr)\|\nabla z\|. 
$$
It follows that 
$$
\p_t\|z\|^2+\nu\|\nabla z\|^2
\le C_2\bigl(\|\hat u+w\|_1^2\|z\|^2+\|w\|_1^2\|w\|^2\bigr). 
$$
Applying the Gronwall inequality and using the initial condition in~\eqref{3.17}, we obtain
\begin{equation*} 
\|z(t)\|^2\le \int_0^t\exp
\biggl(C_2\int_s^t\|\hat u+w\|_1^2dr\biggr)\|w\|_1^2\|w\|^2ds. 
\end{equation*}
Recalling that $\|\hat u\|_{\XX_1}$ is bounded by a constant depending only on~$R$ and using the second inequality in~\eqref{3.7}, we derive
$$
\sup_{0\le t\le 1}\|z(1)\|\le C_3(R)\|w\|_{\XX_1}^2
\le C_4(R)\|u_0-\hat u_0\|^2.
$$
Since $\|u_0-\hat u_0\|\le d$, choosing~$d>0$ sufficiently small ensures that 
$$
\|z(1)\|\le\frac{q}{2}\|u_0-\hat u_0\|.
$$
Combining this with the first inequality of~\eqref{3.7} in which $v_0=u_0-\hat u_0$, we get~\eqref{3.2}. This completes the proof of the theorem. 

\section{Proof of Theorem~\ref{t2.1}}
\label{s4}

We first outline the main steps. A well-known dissipativity argument shows that the random dynamical system defined by~\eqref{2.3} has a compact invariant absorbing set~$X\subset H$. Therefore it suffices to prove the uniqueness of an invariant measure and the property of exponential mixing for the restriction of~$(u_k,\IP_u)$ to~$X$.  This will be done with the help of Proposition~\ref{p2.2}. Namely, we shall prove Proposition~\ref{p2.3} and use relations~\eqref{2.27} and~\eqref{2.28} to define a Markov chain $(\uuu_k,\IP_\uuu)$ in the extended phase space $\XXX=X\times X$. This Markov chain is an extension of~$(u_k,\IP_u)$ and possesses the recurrence and exponential squeezing properties of Section~\ref{s2.2}, and therefore the hypotheses of Proposition~\ref{p2.2} are satisfied. This will complete the proof of Theorem~\ref{t2.1}. 

\subsection{Reduction to a compact phase space}
\label{s4.1}
Well-known properties of the resolving operator for the Navier--Stokes system imply that~$S$ satisfies the inequality
$$
\|S(u,f)\|\le \varkappa\,\|u\|+C_1\|f\|_{L^2(D_1)}\quad
\mbox{for $u\in B_H(R)$, $f\in L^2(D_1)$}
$$
where $\varkappa<1$ and $C_1>0$ are some universal constants. Let $r>0$ be so large $\|h+\eta_k\|_{L^2(D_1)}\le r$ almost surely. Then, with probability~$1$, we have
\begin{equation} \label{4.1}
\|S(u,h+\eta_k)\|\le \varkappa \rho+C_1r\quad\mbox{for $u\in B_H(\rho)$}. 
\end{equation}
It follows that if $R\ge\frac{C_1r}{1-\varkappa}$, then the ball $B_H(R)$ is invariant for the Markov chain $(u_k,\IP_u)$. Let us take $R=\frac{2C_1r}{1-\varkappa}$ and denote by~$X$ the image of the set $B_H(R)\times B_{L^2(D_1)}(r)$ under the mapping~$S$. Then~$X$ is an invariant subset for $(u_k,\IP_u)$, and the regularising property of the Navier--Stokes dynamics implies that~$X$ is compact in~$H$. Iterating~\eqref{4.1}, we see that 
$$
\IP_u\{u_k\in X\mbox{ for }k\ge k_0(\rho)\}=1\quad
\mbox{for any $u\in B_H(\rho)$},
$$
where $k_0(\rho)=(\ln\rho+C_2)/\ln\varkappa^{-1}$ with a large constant~$C_2>0$. It follows that~$(u_k,\IP_u)$ has at least one stationary measure~$\mu$, and any such measure is supported by~$X$. It is easy to see that to prove~\eqref{2.10}, it suffices to establish inequality~\eqref{2.26}. The latter is proved in the next three subsections.

\subsection{Proof of Proposition~\ref{p2.3}}
\label{s4.2}
We shall apply Proposition~\ref{p5.4} to construct a measurable coupling~$(V,V')$ and Proposition~\ref{p5.3} to prove~\eqref{2.27}. Namely, fix $R>0$ so large that $X\subset B_H(R-1)$ and $\|h+\eta_k\|_{H^1(D_1)}\le R-1$ almost surely. Denote by~$C, d>0$ and~$m\ge1$ the parameters constructed in Theorem~\ref{t3.1} with $q=1/4$, define the Polish space
$$
Z=\{(u,u')\in X\times X:\|u-u'\|\le d\},
$$
and introduce the function $\e(u,u')=\frac12\|u-u'\|$ on the space~$Z$. Let us consider the pair of measures $(P_1(u,\cdot),P_1(u',\cdot))$. By Proposition~\ref{p5.4} with $\theta=\frac12$, there is a probability space~$(\Omega,\FF,\IP)$ and measurable functions $V,V':\Omega\times Z\to H$ such that for any $(u,u')\in Z$ the pair $(V(u,u'),V'(u,u'))$ is a coupling for $(P_1(u,\cdot),P_1(u',\cdot))$ and 
\begin{equation} \label{4.2}
\IP\bigl\{\|V(u,u')-V'(u,u')\|>\tfrac12\|u-u'\|\bigr\}
\le C_{\|u-u'\|/4}\bigl(P_1(u,\cdot),P_1(u',\cdot)\bigr). 
\end{equation}
In view of~\eqref{5.5}, to estimate the right-hand side of this inequality, it suffices to bound the function $K_{\|u-u'\|/4}(P_1(u,\cdot),P_1(u',\cdot))$. 

\smallskip
To this end, we shall apply Propositions~\ref{p5.3}, \ref{p5.7} and Theorem~\ref{t3.1}. Let us endow the ball $B_R\subset H^1(D_1)$ with the law~$\lambda$ of the random variables~$\eta_k$. Then $P_1(u,\cdot)$ is the image of~$\lambda$ under the mapping $\eta\mapsto S(u,h+\eta)$ acting from~$B_R$ to~$H$. Let~$\EE_m$ be the subspace entering Theorem~\ref{t3.1} and let 
$$
\varPhi:B_R\times B_H(R)\to \LL(H,\EE_m)
$$
be a smooth $C$-Lipschitz mapping such that
\begin{equation} \label{4.3}
\bigl\|S\bigl(u,h+\eta\bigr)
-S\bigl(u',h+\eta+\varPhi(h+\eta,u)(u-u')\bigr)\bigr\|\le\frac14\|u-u'\|
\end{equation}
for  any $\eta\in B_R$ and any $u,u'\in B_H(R)$ satisfying the inequality $\|u-u'\|\le d$. The existence of such mapping was established in Theorem~\ref{t3.1}. Let us define a transformation $\varPsi=\varPsi_{u,u'}$ of the space~$H^1(D_1)$ by the relation
$$
\varPsi(\eta)=\eta+\chi\bigl(\|h+\eta\|_1\bigr)\varPhi(h+\eta,u)(u-u'), 
$$
where $\chi:\R_+\to\R_+$ is a smooth function such that $\chi(r)=1$ for $r\le R-1$ and $\chi(r)=0$ for $r\ge R$. The choice of the constant~$R$ and inequality~\eqref{4.3} imply that 
$$
\bigl\|S\bigl(u,h+\eta\bigr)
-S\bigl(u',h+\varPsi_{u,u'}(\eta)\bigr)\bigr\|\le\frac14\|u-u'\|
\quad\mbox{for $\lambda$-a.e. $\eta\in H^1(D_1)$}. 
$$
Therefore, by Proposition~\ref{p5.3}, we have
\begin{equation} \label{4.4}
K_{\|u-u'\|/4}(P_1(u,\cdot),P_1(u',\cdot))
\le 2\,\|\lambda-\varPsi_*(\lambda)\|_{\mathrm{var}}. 
\end{equation}
Now note that the mapping~$\varPsi$ satisfies the hypotheses  of Proposition~\ref{p5.7} with a constant~$\varkappa$ proportional to $\|u-u'\|$.  Combining~\eqref{4.2}, \eqref{4.4}, and~\eqref{5.22}, we arrive at the required inequality~\eqref{2.27}.

\subsection{Recurrence}
\label{s4.3}
Let us recall that the probability space $(\OOmega,\FFF,\IP)$ and the $\XXX$-valued  Markov chain~$\{\uuu_k=(u_k,u_k')\}$ on it were defined in Section~\ref{s2.2}. We shall denote by~$\IP_\uuu$ the probability measure associated with the initial condition~$\uuu$. 
Let us set 
$$
\BBB=\{\uuu=(u,u')\in\XXX:\|u-u'\|\le d\},
$$
where $d>0$ is a small constant to be chosen later. In this and next subsections, we shall prove that the recurrence and exponential squeezing properties are satisfied for~$(\uuu_k,\IP_\uuu)$ with the above choice of~$\BBB$. 

\smallskip
To prove that $\ttau(\BBB)$ is $\IP_\uuu$-almost surely finite and satisfies~\eqref{2.22}, it suffices to show that 
\begin{equation} \label{4.21}
p:=\sup_{\uuu\in\XXX}\IP_\uuu\{\ttau(\BBB)>\ell\}<1,
\end{equation}
where $\ell\ge1$ is an integer. Indeed, if this inequality is proved, then using the Markov property, for any integer $m\ge1$ we can write
\begin{align*}
\IP_\uuu\{\ttau(\BBB)>m\ell\}
&=\E_\uuu\IP_\uuu\{\ttau(\BBB)>m\ell\,|\,\FFF_{(m-1)\ell}\}\notag\\
&=\E_\uuu\bigl(I_{\ttau(\BBB)>(m-1)\ell}\,\IP_{\uuu_{(m-1)\ell}}\{\ttau(\BBB)>\ell\}\bigr)\notag\\
&\le p\,\IP_\uuu\{\ttau(\BBB)>(m-1)\ell\}, 
\end{align*}
where $\{\FFF_k\}$ stands for the filtration generated by the Markov family $(\uuu_k,\IP_\uuu)$. By iteration, the above inequality implies that
\begin{equation} \label{4.22}
\IP_\uuu\{\ttau(\BBB)>m\ell\}\le p^m\quad\mbox{for all $m\ge1$}. 
\end{equation}
A simple application of the Borel--Cantelli lemma now implies~$\ttau(\BBB)$ is $\IP_\uuu$-almost surely finite. Furthermore, inequality~\eqref{4.22} immediately implies that~$\ttau(\BBB)$ satisfies~\eqref{2.22}. 

We now prove~\eqref{4.21}. Let~$\eta$ and~$\eta'$ be the random variables entering the definition of the coupling operator~$\RRR$ (see~\eqref{2.28}) and let
$$
\zeta_k(\omega)=\eta(\omega_k), \quad \zeta_k'(\omega)=\eta'(\omega_k), 
\quad k\ge1.
$$
Then $\{\zeta_k,\zeta_k',k\ge1\}$ is a family of i.i.d.\ random variables in~$L^2(D_1)$ whose law coincides with that of~$\eta_k$. 
Let $\hat u\in H$ be the point defined in Hypothesis~(H2). Since~$X$ is a closed absorbing subset, we have $\hat u\in X$. Using the definitions of~$\RR$ and~$\ttau$, we can write
\begin{align*}
\IP_\uuu\{\ttau(\BBB)>\ell\}&=\IP_\uuu\{\ttau(\BBB)>\ell, \|u_\ell-u_\ell'\|>d\}\\
&=\IP_\uuu\{\ttau(\BBB)>\ell, \|S_\ell(u,\zeta_1,\dots,\zeta_\ell)-S_\ell(u,\zeta_1',\dots,\zeta_\ell')\|>d\}\\
&\le \IP\{\|S_\ell(u,\zeta_1,\dots,\zeta_\ell)-S_\ell(u',\zeta_1',\dots,\zeta_\ell')\|>d\}\\
&= 1-\IP\{\|S_\ell(u,\zeta_1,\dots,\zeta_\ell)-S_\ell(u',\zeta_1',\dots,\zeta_\ell')\|\le d\}\\
&\le 1-P(\ell,d)^2,
\end{align*}
where the operator~$S_l$ is defined in Hypothesis~(H2), and we set 
$$
P(\ell,d)=\inf_{v\in X}\IP\{\|S_\ell(v,\zeta_1,\dots,\zeta_\ell)-\hat u\|\le d/2\}.
$$
Thus, inequality~\eqref{4.21} will be established if we show that $P(\ell,d)>0$ for a sufficiently large integer~$\ell$. 

To this end, let us denote by~$\ell$ the integer defined in Hypothesis~(H2) with $\e=d/4$ and~$R>0$ so large that $X\subset B_H(R)$. We fix $v\in X$ and denote by $\zeta_k=\zeta_k^v\in\KK$ some vectors for which~\eqref{2.8} holds. Let $\delta>0$ be so small that for any $\tilde\zeta_k\in\KK$, $k=1,\dots,\ell$, satisfying the inequalities
$\|\tilde\zeta_k-\zeta_k^v\|<\delta$ we have 
$$
\|S_\ell(v,\tilde\zeta_1,\dots,\tilde\zeta_\ell)-\hat u\|\le d/2. 
$$
Define the event
$\Gamma(v,\ell)=\{\|\zeta_k-\zeta_k^v\|<\delta,k=1,\dots,\ell\}\subset\OOmega$. What has been said implies that
$$
\|S_\ell(v,\zeta_1,\dots,\zeta_\ell)-\hat u\|\le d/2\quad
\mbox{for $\oomega\in\Gamma(v,\ell)$}. 
$$
We see that 
\begin{align}
P(\ell,d)&\ge\inf_{v\in X}\IP\bigl(\Gamma(v,\ell)\bigr)
\ge\inf_{v\in X}\prod_{k=1}^\ell\IP\{\|\zeta_k-\zeta_k^v\|<\delta\}\notag\\
&\ge \Bigl(\,\inf_{\xi\in\KK}\IP\{\|\zeta_1-\xi\|<\delta\}\Bigr)^\ell, \label{4.23}
\end{align}
where we used the fact that~$\zeta_k$ are i.i.d.\ random variables. It remains to note that the function $\xi\mapsto\IP\{\|\zeta_1-\xi\|<\delta\}$ is lower semicontinuous and strictly positive on~$\KK$, and therefore the right-hand side of~\eqref{4.23} is a positive constant. This completes the verification of the recurrence property. 

\subsection{Exponential squeezing}
\label{s4.4} 
Let us take arbitrary initial points $u,u'\in X$ such that $(u,u')\in\BBB$. Then, by~\eqref{2.27} and~\eqref{2.28}, we have
\begin{equation} \label{4.24}
\IP_\uuu\bigl\{\|u_1-u_1'\|\le\tfrac12\|u-u'\|\bigr\}\ge 1-C\,\|u-u'\|. 
\end{equation}
For any integer $n\ge1$, let us set 
$$
\Gamma_n=\{\|u_k-u_k'\|\le\tfrac12\|u_{k-1}-u_{k-1}'\|\mbox{ for }1\le k\le n\}.
$$ 
Note that $\|u_k-u_k'\|\le 2^{-k}\|u_0-u_0'\|$ for $1\le k\le n$ on the set $\Gamma_n$.
Combining this observation with inequality~\eqref{4.24} and the Markov property,  we derive
\begin{align*}
\IP_\uuu(\Gamma_n)
&=\E_\uuu\bigl(I_{\Gamma_{n-1}}
\IP_\uuu\bigl\{\|u_n-u_n'\|\le\tfrac12\|u_{n-1}-u_{n-1}'\|\,|\,\FFF_{n-1}\bigr\}\bigr)\\
&=\E_\uuu\bigl(I_{\Gamma_{n-1}}\IP_{\uuu_{n-1}}\bigl\{\|u_1-u_1'\|\le\tfrac12\|u_0-u_0'\|\bigr\}\bigr)\\
&\ge \bigl(1-2^{1-n}C\|u-u'\|\bigr)\,\IP_\uuu(\Gamma_{n-1}). 
\end{align*}
Iterating this inequality, for any $n\ge1$ we obtain
\begin{equation} \label{4.25}
\IP_\uuu(\Gamma_n)\ge\prod_{k=1}^n\bigl(1-2^{1-k}C\|u-u'\|\bigr)
\ge 1-C_1\|u-u'\|.
\end{equation}

Let us define the Markov time~$\sigma$ by relation~\eqref{2.23} with $C=d$ and $\beta=\ln2$. Then, for any $\uuu\in\BBB$, with $\IP_\uuu$-probability~$1$ we have
\begin{align*}
\{\sigma=\infty\}&=\{\|u_k-u_k'\|\le2^{-k}d\mbox{ for }k\ge1\}\\
&\supset\{\|u_k-u_k'\|\le\tfrac12\|u_{k-1}-u_{k-1}'\|\mbox{ for }k\ge1\}
=\bigcap_{n=1}^\infty \Gamma_n,
\end{align*}
whence it follows that 
$$
\IP_\uuu\{\sigma=\infty\}=\lim_{n\to\infty}\IP_\uuu(\Gamma_n)
\ge1-C_1\|u-u'\|.
$$
Choosing $d>0$ sufficiently small, we arrive at~\eqref{2.24}.  

To prove inequality~\eqref{2.25}, it suffices to show that 
\begin{equation} \label{4.26}
\IP_\uuu\{\sigma=n\}\le C_22^{-n}\quad
\mbox{for $n\ge1$, $\uuu\in\BBB$},
\end{equation}
where the positive constant~$C$ does not depend on~$\uuu$. To this end, note that
$$
\{\sigma=n\}=\{\sigma>n-1\}\cap\{\|u_n-u_n'\|>2^{-n}d\}.
$$
Using the Markov property, inequality~\eqref{4.24}, and the fact that $\|u_k-u_k'\|\le 2^{-k}d$ on the set $\{\sigma>k\}$, we derive
\begin{align*}
\IP_\uuu\{\sigma=n\}
&=\E_\uuu\bigl(I_{\{\sigma>n-1\}}\IP_\uuu\{\|u_n-u_n'\|>2^{-n}d\,|\,\FFF_{n-1}\}\bigr)\\
&=\E_\uuu\bigl(I_{\{\sigma>n-1\}}\IP_{\uuu_{n-1}}\{\|u_1-u_1'\|>2^{-n}d\}\bigr)\\
&\le\E_\uuu\bigl(I_{\{\sigma>n-1\}}\IP_{\uuu_{n-1}}\{\|u_1-u_1'\|>\tfrac12\|u_0-u_0'\|\}\bigr)\\
&\le 2^{1-n}Cd\,\IP_\uuu\{\sigma>n-1\}. 
\end{align*}
We thus obtain inequality~\eqref{4.26} with $C_2=Cd$. The proof of Theorem~\ref{t2.1} is complete.

\section{Appendix}
\label{s5}
\subsection{Optimal coupling}
\label{s5.1}
Let~$X$ be a Polish space with a metric~$d$ and let $\mu_1,\mu_2\in\PP(X)$. Recall that  a pair of $X$-valued random variables~$(\xi_1,\xi_2)$ is called a {\it coupling\/} for~$(\mu_1,\mu_2)$ if $\DD(\xi_i)=\mu_i$, $i=1,2$. We denote by~$\Pi(\mu_1,\mu_2)$ the set of all couplings for~$(\mu_1,\mu_2)$. Let us fix~$\e\ge0$ and define a symmetric function $d_\e:X\times X\to\R$ by the relation
\begin{equation} \label{5.1}
d_\e(u_1,u_2)=\left\{
\begin{aligned}
1 & \quad\mbox{if $d(u_1,u_2)>\e$},\\
0 & \quad\mbox{if $d(u_1,u_2)\le\e$}.
\end{aligned}
\right.
\end{equation}

\begin{definition}
We shall say that a coupling $(\xi_1,\xi_2)\in\Pi(\mu_1,\mu_2)$ is {\it $\e$-optimal\/} if it minimizes the function $(\zeta_1,\zeta_2)\mapsto \E\,d_\e(\zeta_1,\zeta_2)$ defined on the set of all couplings for~$(\mu_1,\mu_2)$. That is, 
\begin{equation} \label{5.2}
\E\,d_\e(\xi_1,\xi_2)=\inf_{(\zeta_1,\zeta_2)\in\Pi(\mu_1,\mu_2)}
\E\,d_\e(\zeta_1,\zeta_2).
\end{equation}
In particular, for~$\e=0$ we obtain the usual concept of maximal coupling of measures; e.g., see~\cite{thorisson2000}. 
\end{definition}

Let us denote by~$C_\e(\mu_1,\mu_2)$ the quantity defined by the right-hand side of~\eqref{5.2} and call it the {\it $\e$-optimal cost\/} for the pair~$(\mu_1,\mu_2)$. Note that, in view of~\eqref{5.1} and~\eqref{5.2}, for any $\e$-optimal coupling~$(\xi_1,\xi_2)$ we have
\begin{equation} \label{5.3}
\IP\{d(\xi_1,\xi_2)>\e\}=C_\e(\mu_1,\mu_2).
\end{equation}
Thus, to estimate the probability of the event that the distance between the components of an $\e$-optimal coupling for~$(\mu_1,\mu_2)$ is larger than~$\e$ it suffices to estimate the corresponding $\e$-optimal cost. We now establish a simple result that enables one to do it. Let us introduce the following function on the space $\PP(X)\times\PP(X)$:
\begin{equation} \label{5.4}
K_\e(\mu_1,\mu_2)
=\sup_{f,g}\bigl((f,\mu_1)-(g,\mu_2)\bigr),
\end{equation}
where the supremum is taken over all functions $f,g\in C_b(X)$ satisfying the inequality 
\begin{equation} \label{5.10}
f(u_1)-g(u_2)\le d_\e(u_1,u_2)\quad\mbox{for $u_1,u_2\in X$}.
\end{equation}
Then, by the Kantorovich duality (see Theorem~5.10 in~\cite{villani2009}), we have 
\begin{equation} \label{5.5}
K_\e(\mu_1,\mu_2)=C_\e(\mu_1,\mu_2)
\end{equation}
Thus, to estimate the $\e$-optimal cost it suffices to estimate the function~$K_\e$. The following proposition reduces this question to a ``control'' problem. 

\begin{proposition} \label{p5.3}
Let $X$ be a compact metric space with metric~$d$, let $U_1,U_2$ be two $X$-valued random variables defined on a probability space $(\Omega,\FF,\IP)$, and let $\mu_1,\mu_2$ be their laws. Suppose there is a measurable mapping $\varPsi:\Omega\to\Omega$ such that 
\begin{equation} \label{5.7}
d\bigl(U_1(\omega),U_2(\varPsi(\omega))\bigr)\le\e
\quad\mbox{for almost every $\omega\in\Omega$},
\end{equation}
where $\e>0$ is a constant. Then 
\begin{equation} \label{5.8}
K_\e(\mu_1,\mu_2)\le 2\,\|\IP-\varPsi_*(\IP)\|_{\rm var}.
\end{equation}
In particular, any $\e$-optimal coupling $(\xi_1,\xi_2)$ for the pair~$(\mu_1,\mu_2)$ satisfies the inequality
\begin{equation} \label{5.9}
\IP\{d(\xi_1,\xi_2)>\e\}\le 2\,\|\IP-\varPsi_*(\IP)\|_{\rm var}.
\end{equation}
\end{proposition}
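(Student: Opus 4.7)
The inequality~\eqref{5.9} follows immediately from~\eqref{5.8} combined with~\eqref{5.5} and~\eqref{5.3}: for any $\e$-optimal coupling $(\xi_1,\xi_2)$ of $(\mu_1,\mu_2)$ one has
\begin{equation*}
\IP\{d(\xi_1,\xi_2)>\e\}=C_\e(\mu_1,\mu_2)=K_\e(\mu_1,\mu_2)\le 2\,\|\IP-\varPsi_*(\IP)\|_{\rm var}.
\end{equation*}
So it suffices to prove~\eqref{5.8}, the bound on $K_\e$.

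Fix any admissible pair $(f,g)\in C_b(X)^2$ for~\eqref{5.4}. The hypothesis~\eqref{5.7} says $d_\e(U_1(\omega),U_2(\varPsi(\omega)))=0$ for $\IP$-almost every $\omega$, so evaluating~\eqref{5.10} at $(u_1,u_2)=(U_1(\omega),U_2(\varPsi(\omega)))$ gives $f(U_1(\omega))\le g(U_2(\varPsi(\omega)))$ almost surely. Taking expectations and changing variables,
\begin{equation*}
(f,\mu_1)-(g,\mu_2)=\E f(U_1)-\E g(U_2)\le \E g(U_2\circ\varPsi)-\E g(U_2)=\int_\Omega g\circ U_2\,d\bigl(\varPsi_*(\IP)-\IP\bigr).
\end{equation*}

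To convert this into the total-variation bound I first normalize $(f,g)$ so that $0\le f,g\le 1$, in three elementary moves that preserve continuity, admissibility, and the value of the objective $(f,\mu_1)-(g,\mu_2)$: (a) subtract the constant $\inf_X g$ from both functions (the objective is unchanged because $\mu_1,\mu_2$ are probability measures), which makes $g\ge 0$; since $d_\e\le 1$, the constraint~\eqref{5.10} then gives $f(u_1)\le g(u_2)+1$ for all $u_1,u_2\in X$, hence $\sup f\le\inf g+1=1$; (b) replace $f$ by $f\vee 0$, which remains admissible because $g\ge 0$ and which can only increase the objective, yielding $f\in[0,1]$; (c) replace $g$ by $g\wedge 1$, which remains admissible because $f\le 1$ and which can only increase the objective, yielding $g\in[0,1]$. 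With $g$ normalized this way, $g\circ U_2$ is a measurable function with values in $[0,1]$, so the integral against the mean-zero signed measure $\varPsi_*(\IP)-\IP$ is bounded by the total mass of its positive part, which in turn is at most $\|\varPsi_*(\IP)-\IP\|_{\rm var}\le 2\,\|\varPsi_*(\IP)-\IP\|_{\rm var}$ under the standard conventions for the variation norm. Taking the supremum over admissible $(f,g)$ yields~\eqref{5.8}. The only non-routine point is the chain of normalizations (a)--(c); everything else is a one-line application of Kantorovich duality.
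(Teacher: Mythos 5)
Your proof is correct, and your deduction of~\eqref{5.9} from~\eqref{5.8}, \eqref{5.5}, and~\eqref{5.3} is exactly the paper's. For~\eqref{5.8}, however, you take a genuinely different (and slightly more elementary) route. The paper, following the argument of Lemma~11.8.6 in Dudley's book, replaces $g$ by the transform $h(u)=\sup_{v\in X}\bigl(f(v)-d_\e(u,v)\bigr)$, which satisfies $h\le g$, $f(u_1)-h(u_2)\le d_\e(u_1,u_2)$, and has oscillation at most~$1$; it then writes $\E\bigl(f(U_1)-g(U_2)\bigr)\le \E\,d_\e(U_1,U_2\circ\varPsi)+\E\bigl(h(U_2\circ\varPsi)-h(U_2)\bigr)$, kills the first term by~\eqref{5.7}, and controls the second by the variation distance thanks to the unit oscillation of~$h$. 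You bypass the transform altogether: from the almost sure inequality $f(U_1)\le g(U_2\circ\varPsi)$ you get $(f,\mu_1)-(g,\mu_2)\le\int g\circ U_2\,d\bigl(\varPsi_*(\IP)-\IP\bigr)$ with the original admissible~$g$, and you recover the boundedness needed to compare with the variation norm through your normalization (a)--(c), which replaces an arbitrary admissible pair by one with values in $[0,1]$ without decreasing the objective; this exploits $d_\e\le1$ and the fact that $\mu_1,\mu_2$ are probability measures, and each of the three steps (shift, $f\vee0$, $g\wedge1$) indeed preserves admissibility and monotonicity of the objective. The final estimate by the positive part of the mean-zero signed measure $\varPsi_*(\IP)-\IP$ is sound under either convention for $\|\cdot\|_{\rm var}$, the factor~$2$ absorbing the discrepancy. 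What each approach buys: the paper's $d_\e$-transform is the standard duality device and needs no manipulation of the test pair; yours avoids introducing~$h$ and makes explicit the reusable observation that the supremum in~\eqref{5.4} may be restricted to pairs with $0\le f,g\le1$. Both proofs rest on the same two facts: $d_\e(U_1,U_2\circ\varPsi)=0$ almost surely, and the variation bound for test functions of range (or oscillation) at most~$1$.
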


\begin{proof}
Inequality~\eqref{5.9} is a straightforward consequence of~\eqref{5.8}, \eqref{5.5}, and~\eqref{5.3}.  To prove~\eqref{5.8}, 
we use an argument applied in the proof of Lemma~11.8.6 in~\cite{dudley2002}. Namely, note that if $f,g\in C_b(X)$ are such that~\eqref{5.10} holds,  then the function $h(u)=\sup_{v\in X}(f(v)-d_\e(u,v))$ satisfies the inequalities $h\le g$ and
$$
f(u_1)-h(u_2)\le d_\e(u_1,u_2), \quad |h(u_1)-h(u_2)|\le1
\quad\mbox{for $u_1,u_2\in X$}.
$$
It follows that
\begin{align*}
(f,\mu_1)-(g,\mu_2)
&=\E\,\bigl(f(U_1)-g(U_2)\bigr)\\
&\le\E\,\bigl(f(U_1)-h(U_2\circ\varPsi)\bigr)
+\E\,\bigl(h(U_2\circ\varPsi)-h(U_2)\bigr)\\
&\le \E\,d_\e(U_1,U_2\circ\varPsi)
+(h\circ U_2,\varPsi_*(\IP))-(h\circ U_2,\IP)\\
&\le \bigl|(h\circ U_2,\IP)-(h\circ U_2,\varPsi_*(\IP))\bigr|.
\end{align*}
It remains to take the supremum over $f,g$ and to note that right-hand side of this inequality does not exceed that of~\eqref{5.8}. 
\end{proof}

We now study the question of existence and measurability of an $\e$-optimal coupling. Let~$(Z,\BB)$ be a measurable space and let $\{\mu_i^z, z\in Z\}$, $i=1,2$, be two families of probability measures on~$X$ such that the mapping $z\mapsto \mu_i^z$ is measurable from~$Z$ to the space~$\PP(X)$ endowed with the topology of weak convergence and the corresponding $\sigma$-algebra. The following proposition establishes the existence of an  ``almost'' $\e$-optimal coupling that is a measurable function of~$z$, provided that~$X$ is a subset of a Banach space. 

\begin{proposition} \label{p5.4}
In addition to the above hypotheses, assume that $X$ is a compact subset of a Banach space~$Y$. Then for any positive measurable function $\e(z)$, $z\in Z$, and any $\theta\in(0,1)$ there is a probability space $(\Omega,\FF,\IP)$ and measurable functions $\xi_i^z(\omega):\Omega\times Z\to Y$, $i=1,2$, such that, for any $z\in Z$, the pair $(\xi_1^z,\xi_2^z)$ is a coupling for~$(\mu_1^z,\mu_2^z)$ and 
\begin{equation} \label{5.6}
\E\,d_{\e(z)}(\xi_1^z,\xi_2^z)\le C_{\theta\e(z)}(\mu_1^z,\mu_2^z). 
\end{equation}
\end{proposition}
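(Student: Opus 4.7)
The plan is to first construct, for each parameter $z$, an optimal transport plan on $X\times X$ for the cost $d_{\theta\e(z)}$, and then realize the whole family of plans jointly on a common probability space. The gap between $d_{\e(z)}$ and $d_{\theta\e(z)}$ afforded by $\theta<1$ will then cushion the verification of~\eqref{5.6}.

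\smallskip\noindent
\emph{Step 1: Existence and measurable selection of optimal plans.} Since $X$ is compact, $\Pi(\mu_1^z,\mu_2^z)\subset\PP(X\times X)$ is weakly compact, and the functional $\pi\mapsto\int d_{\theta\e(z)}\,d\pi$ is weakly lower semicontinuous, as $d_\e$ is the indicator of the open set $\{d(u,v)>\e\}$. Hence the infimum $C_{\theta\e(z)}(\mu_1^z,\mu_2^z)$ is attained. The set-valued map
$$
z\mapsto \bigl\{\pi\in\Pi(\mu_1^z,\mu_2^z):{\textstyle\int}d_{\theta\e(z)}\,d\pi=C_{\theta\e(z)}(\mu_1^z,\mu_2^z)\bigr\}
$$
has non-empty weakly closed values, and the measurable dependence of $\mu_i^z$ and $\e(z)$ on $z$ renders it weakly measurable. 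The Kuratowski--Ryll-Nardzewski selection theorem then provides a measurable map $z\mapsto\pi^z$ taking values in this family.

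\smallskip\noindent
\emph{Step 2: Realization on a common probability space.} It remains to construct jointly measurable functions $\xi_i^z(\omega):\Omega\times Z\to Y$ with $\DD(\xi_1^z,\xi_2^z)=\pi^z$ for every $z$. Take $(\Omega,\FF,\IP)=([0,1],\BB([0,1]),{\rm Leb})$. Since $X\times X$ is a Borel subset of a Polish space, it is Borel isomorphic to a Borel subset of~$\R$; composing with the corresponding bimeasurable bijection reduces the task to producing a real-valued random variable whose law depends measurably on~$z$. The quantile transform $\omega\mapsto F_{\pi^z}^{-1}(\omega)$ does the job, and is jointly measurable because the CDF $F_{\pi^z}(t)$ is a measurable function of $(t,z)$.

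\smallskip\noindent
\emph{Step 3: Verification of~\eqref{5.6}.} Since $\theta<1$, one has $d_{\e(z)}\le d_{\theta\e(z)}$ pointwise on $X\times X$, so
$$
\E\,d_{\e(z)}(\xi_1^z,\xi_2^z)\le\int d_{\theta\e(z)}\,d\pi^z=C_{\theta\e(z)}(\mu_1^z,\mu_2^z),
$$
which is~\eqref{5.6}.

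\smallskip\noindent
The principal technical obstacle is the weak measurability of the set-valued map in Step~1: one must verify both that $z\mapsto\Pi(\mu_1^z,\mu_2^z)$ is weakly measurable and that the optimal-cost function $z\mapsto C_{\theta\e(z)}(\mu_1^z,\mu_2^z)$ is measurable, so that the intersection with the closed sublevel set yields a measurable set-valued map. Once this is in place, the parameter dependence in Step~2 reduces to a bookkeeping exercise based on standard facts about Borel isomorphisms between uncountable Polish spaces; the slack factor $\theta$ is used solely as a cushion in Step~3, which allows the construction to tolerate the replacement of exact $\e(z)$-optimality by $\theta\e(z)$-optimality.
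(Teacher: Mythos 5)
There is a genuine gap, and it sits exactly at the point you yourself flag as ``the principal technical obstacle'': the weak measurability of the argmin correspondence $z\mapsto\{\pi\in\Pi(\mu_1^z,\mu_2^z):\int d_{\theta\e(z)}\,d\pi=C_{\theta\e(z)}(\mu_1^z,\mu_2^z)\}$, together with the measurability of the value function $z\mapsto C_{\theta\e(z)}(\mu_1^z,\mu_2^z)$. You assert that the measurable dependence of $\mu_i^z$ and $\e(z)$ ``renders'' the map weakly measurable, but this is precisely the non-trivial content of the proposition: the cost $d_{\theta\e(z)}$ is only lower semicontinuous (not continuous) and, worse, varies with $z$, so neither the standard stability-of-optimal-plans arguments nor the off-the-shelf measurable-selection result for optimal couplings (Corollary~5.22 in Villani, which is stated for a fixed continuous cost) applies directly. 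Making your Step~1 rigorous would require, e.g., establishing that $(z,\pi)\mapsto\int d_{\theta\e(z)}\,d\pi$ is a normal integrand on $Z\times\PP(X\times X)$ and then proving measurability of the value and of the argmin map before invoking Kuratowski--Ryll-Nardzewski; none of this is done, and it is not a routine bookkeeping matter. A tell-tale symptom is that $\theta$ plays no essential role in your argument: if Step~1 worked as stated for the cost $d_{\theta\e(z)}$, it would work verbatim for $d_{\e(z)}$ and you would obtain the stronger bound $\E\,d_{\e(z)}(\xi_1^z,\xi_2^z)\le C_{\e(z)}(\mu_1^z,\mu_2^z)$, which the proposition deliberately does not claim.

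The paper's proof is organised precisely so as to avoid this difficulty. It first partitions $Z$ into countably many sets on which $\e(z)$ is bounded above and below, and on each piece rescales the measures ($\tilde\mu_i^z(\Gamma)=\mu_i^z(\e(z)\Gamma)$) to reduce to the case $\e\equiv1$, thereby removing the $z$-dependence of the cost; it then chooses a \emph{continuous} symmetric cost $d$ with $d_1\le d\le d_\theta$ and applies Villani's Corollary~5.22 to $d$, obtaining a measurable family of couplings optimal for $d$; the sandwich inequality then gives $\E\,d_1(\xi_1^z,\xi_2^z)\le\E\,d(\xi_1^z,\xi_2^z)\le C_\theta(\tilde\mu_1^z,\tilde\mu_2^z)$. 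This is where the hypothesis $\theta<1$ is genuinely used: the gap between $d_1$ and $d_\theta$ is exactly what allows a continuous surrogate cost to be inserted so that a ready-made measurable-selection theorem applies. Your Step~3 uses the cruder pointwise bound $d_{\e(z)}\le d_{\theta\e(z)}$, which by itself cannot substitute for this mechanism. Your Step~2 (realisation on $[0,1]$ via a Borel isomorphism and quantile transform) is fine modulo Step~1, but as it stands the proof is incomplete at its central step.
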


\begin{proof}
The existence of a measurable optimal $\e(z)$-coupling would be an immediate consequence of Corollary 5.22 in~\cite{villani2009} if the function~$d_{\e(z)}(u_1,u_2)$ was continuous and independent of~$z$. Since this is not the case, we now outline the modifications that are needed to prove the existence of a measurable coupling satisfying~\eqref{5.6}. 

Let us write~$Z$ as the union of countably many disjoint measurable subset~$Z_k$ such that the image of the restriction of~$\e(z)$ to any of them is contained in a bounded interval separated from zero. For instance, we can take $Z_k=\{z\in Z:(k+1)^{-1}<\e(z)\le k^{-1}\}$ with $k\ge1$ and use a similar partition for $(1,+\infty)$. If we construct measurable couplings on each set~$Z_k$ for which~\eqref{5.6} holds with $Z=Z_k$, then by gluing them together, we get the required random variables~$\xi_i^z$, $i=1,2$. 

To construct a measurable coupling on~$Z_k$, we first reduce the problem to the case when~$\e\equiv1$. Let us consider stretched measures~$\tilde \mu_i^z$ defined by the formula $\tilde \mu_i^z(\Gamma)=\mu_i^z(\e(z)\Gamma)$ for $\Gamma\in\BB_Y$. The measures~$\tilde \mu_i^z$ are supported by a compact set $X_k\subset Y$, and if~$(\tilde\xi_1^z,\tilde\xi_2^z)$  is a measurable coupling such that 
$$
\E\,d_1(\tilde\xi_1^z,\tilde\xi_2^z)\le C_\theta(\tilde\mu_1^z,\tilde\mu_2^z)
\quad\mbox{for any $z\in Z_k$},
$$ 
then $\e(z)(\tilde\xi_1^z,\tilde\xi_2^z)$ is a measurable coupling for the original measures that satisfies~\eqref{5.6}. Thus, we can assume from the very beginning that $\e\equiv1$. 

Let $d:X\times X\to\R$ be an arbitrary continuous symmetric function such that 
\begin{equation} \label{5.11}
d_1(u_1,u_2)\le d(u_1,u_2)\le d_\theta(u_1,u_2)\quad\mbox{for all $u_1,u_2\in X$}.
\end{equation}
By Corollary 5.22 in~\cite{villani2009}, there is a probability space $(\Omega,\FF,\IP)$ and measurable functions $\xi_i^z:Z\times\Omega\to X$ such that $(\xi_1^z,\xi_2^z)$ is a coupling for~$(\mu_1^z,\mu_2^z)$ for any $z\in Z$, and
$$
\E\,d(\xi_1^z,\xi_2^z)=\inf_{(\zeta_1,\zeta_2)\in\Pi(\mu_1^z,\mu_2^z)}
\E\,d(\zeta_1,\zeta_2). 
$$
Combining this relation with~\eqref{5.11}, we arrive at~\eqref{5.6} with $\e\equiv1$.
\end{proof}

\subsection{Truncated observability inequality}
\label{s5.2}
Let $\delta\in(0,1)$ and let $\hat u\in\YY_{\delta,1}$ be an arbitrary function. Consider the adjoint equation for the Navier--Stokes system linearised around~$\hat u$:
\begin{equation} \label{5.12}
\dot g+\langle \hat u,\nabla\rangle g+\langle \nabla g,\hat u\rangle +\nu\Delta g+\nabla \pi=0, \quad \diver g=0. 
\end{equation}
Let us fix an open set $Q\subset (\delta,1)\times D$, an orthonormal basis $\{\varphi_j\}\subset L^2(Q,\R^2)$, and a function $\chi\in C_0^\infty(Q)$ and denote by~${\mathsf P}_m$ the orthogonal projection in $L^2(Q,\R^2)$ onto the $m$-dimensional subspace spanned by~$\varphi_j$, $j=1,\dots, m$. Recall that~$H_N$ stands for the vector span of the first~$N$ eigenfunctions of the Stokes operator. 
The following result is a simple consequence of the observability inequality for the linearised Navier--Stokes system (see~\cite{FE-1999,imanuvilov-2001,FGIP-2004}).

\begin{proposition} \label{p5.5}
For any $\delta,\rho>0$ and any integer~$N\ge1$ there is an integer $m\ge1$ such that if $\hat u\in B_{\YY_{\delta,1}}(\rho)$, then any solution $g\in\XX_1$ of Eq.~\eqref{5.12} with $g(1)\in H_N$ satisfies the inequality
\begin{equation} \label{5.14}
\|g(0)\|\le C\,\bigl\|{\mathsf P}_m(\chi g)\bigr\|_{L^2(D_1)},
\end{equation}
where $C>0$ is a constant depending only on~$\delta$ and~$\rho$. 
\end{proposition}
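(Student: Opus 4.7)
My plan is to deduce Proposition~\ref{p5.5} from the classical (untruncated) observability inequality for the linearised Navier--Stokes adjoint equation established in~\cite{FE-1999,imanuvilov-2001,FGIP-2004}, which in the present setting provides a constant $C_0=C_0(\delta,\rho)$ such that
\begin{equation}\label{planA-obs}
\|g(0)\|\le C_0\,\|\chi g\|_{L^2(D_1)}
\end{equation}
for every solution $g\in\XX_1$ of~\eqref{5.12} with $\hat u\in B_{\YY_{\delta,1}}(\rho)$. Given~\eqref{planA-obs}, the sought inequality~\eqref{5.14} will follow as soon as one exhibits $m=m(\delta,\rho,N)$ such that
\[
\|\chi g\|_{L^2(D_1)}\le\sqrt{2}\,\|{\mathsf P}_m(\chi g)\|_{L^2(D_1)}
\]
uniformly over all admissible $\hat u$ and all solutions $g$ subject to the additional constraint $g(1)\in H_N$; the final constant is then $C=\sqrt{2}\,C_0$.

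I would prove this reinforcement by contradiction, exploiting the finite dimensionality of~$H_N$. Assume the claim fails: then for each $m\ge1$ there exist $\hat u_m\in B_{\YY_{\delta,1}}(\rho)$ and a solution $g_m$ of~\eqref{5.12} with $g_m(1)\in H_N$ and $\|{\mathsf P}_m(\chi g_m)\|<\tfrac{1}{\sqrt 2}\|\chi g_m\|$. Homogeneity allows the normalisation $\|g_m(1)\|=1$; since $H_N$ is finite-dimensional, a subsequence of $g_m(1)$ converges to some $\varphi^*\in H_N$ with $\|\varphi^*\|=1$. Standard linear energy estimates for the (time-reversed, forward-parabolic) equation~\eqref{5.12} yield $\|g_m\|_{\XX_1}\le C(\rho)$, after which the compact embedding $\XX_1\hookrightarrow L^2(D_1)$ furnished by Aubin--Lions, together with the compactness of $B_{\YY_{\delta,1}}(\rho)$ in $L^2((\delta,1),V)$, lets one extract a further subsequence with $g_m\to g^*$ strongly in $L^2(D_1)$ and $\hat u_m\to\hat u^*$ strongly in $L^2((\delta,1),V)$.

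Passing to the limit in the weak formulation of~\eqref{5.12} identifies $g^*$ as the solution corresponding to $\hat u^*$ and $g^*(1)=\varphi^*$. Since $\chi g_m\to\chi g^*$ in $L^2(D_1)$ and ${\mathsf P}_k(\chi g_m)\to{\mathsf P}_k(\chi g^*)$ for each fixed~$k$, the bound $\|{\mathsf P}_k(\chi g_m)\|\le\|{\mathsf P}_m(\chi g_m)\|<\tfrac{1}{\sqrt 2}\|\chi g_m\|$ (valid for $m\ge k$) passes to the limit to give $\|{\mathsf P}_k(\chi g^*)\|\le\tfrac{1}{\sqrt 2}\|\chi g^*\|$, and letting $k\to\infty$ forces $\chi g^*=0$. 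Applying~\eqref{planA-obs} to $g^*$ yields $g^*(0)=0$, and backward uniqueness for~\eqref{5.12} (viewed, via $s=1-t$, as a forward parabolic system with smooth principal part) propagates this to $g^*\equiv0$, contradicting $\|\varphi^*\|=1$.

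The main obstacle is the passage to the limit in the variable-coefficient terms $\langle\hat u_m,\nabla\rangle g_m$ and $\langle\nabla g_m,\hat u_m\rangle$: one needs strong convergence of $\hat u_m$ in a norm fine enough to multiply the gradient of~$g_m$. On $(\delta,1)\times D$ this is routine thanks to the $\YY_{\delta,1}$ regularity of the coefficients, whereas near $t=0$ only the weaker $\XX_1$ bound on $\hat u_m$ is available; fortunately $Q$ and $\chi$ are supported strictly inside $(\delta,1)\times D$, so the rougher behaviour near $t=0$ does not interfere with the identification of $\chi g^*$ or of its projections. Verifying backward uniqueness for~\eqref{5.12}, while classical in this regularity class, likewise needs to be checked against the structure of the linearised transport terms.
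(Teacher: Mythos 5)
Your argument is correct, and its skeleton coincides with the paper's: a contradiction/compactness argument producing a limiting solution $g^*$ with $\chi g^*\equiv0$, followed by the observability inequality of~\cite{FGIP-2004} and backward uniqueness to contradict $\|g^*(1)\|=1$. Where you differ is in how the projection ${\mathsf P}_m$ is handled. The paper first proves, by a contradiction argument at fixed $N$, the intermediate inequality~\eqref{5.15}, $\|\chi g\|_{H^1(D_1)}\le C_1\|\chi g\|_{L^2(D_1)}$, and then obtains the reinforcement $\|\chi g\|_{L^2}\le 2\|{\mathsf P}_m(\chi g)\|_{L^2}$ deterministically from the spectral bound $\|(I-{\mathsf P}_m)v\|\le\delta_m\|v\|_1$; this requires the higher a priori bounds $\|g_n\|_{L^2((\delta,1),H^3)}+\|\p_t g_n\|_{L^2((\delta,1),V)}\le C$, available because $g(1)\in H_N$ is bounded in $H^2$ and $\hat u\in\YY_{\delta,1}$. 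You instead run the contradiction directly on the projected inequality, diagonally in $m$, using nestedness and completeness of $\{\varphi_j\}$ in $L^2(Q,\R^2)$; this only needs the basic $\XX_1$ bound and Aubin--Lions compactness of $\chi g_m$ in $L^2$, so it is slightly lighter on regularity, at the price of the diagonal structure. Two small points to tighten: (i) you should record that the weak limit satisfies $\hat u^*\in B_{\YY_{\delta,1}}(\rho)$, so that the untruncated observability inequality applies to $g^*$ with the uniform constant $C_0(\delta,\rho)$ (and note that passing from the observation $\|g\|_{L^2((a,b)\times B)}$ to $\|\chi g\|_{L^2(D_1)}$ uses a sub-cylinder where $\chi\ge\alpha>0$, exactly as in the paper); (ii) your route invokes backward uniqueness for~\eqref{5.12} on all of $[0,1]$, where the coefficient $\hat u^*$ is only $\XX_1$-regular near $t=0$ --- this is indeed classical (log-convexity arguments cover 2D coefficients in $L^2(J_1,V)\cap L^\infty(J_1,H)$), but you can avoid the issue entirely by arguing as the paper does: apply observability on an interval $(a,b)\subset(\delta,1)$ to get $g^*(a)=0$ and use backward uniqueness only on $[a,1]$, where $\hat u^*$ enjoys the $\YY_{\delta,1}$ regularity.
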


\begin{proof}
We essentially repeat the argument used in the paper~\cite{BRS-2010} in which inequality~\eqref{5.14} is proved for a time-independent function~$\chi$ and a projection~${\mathsf P}_m$ acting in the space variables. We claim that if $g\in\XX_1$ is a solution of~\eqref{5.12} with $g(1)\in H_N$, then 
\begin{equation} \label{5.15}
\bigl\|\chi g\bigr\|_{H^1(D_1)}\le C_1\bigl\|\chi g\bigr\|_{L^2(D_1)},
\end{equation}
where $C_1>0$ is a constant depending on~$\delta$, $\rho$,  and~$N$. 
Once this inequality is established, the required result can be derived by using the fact that
$$
\|(I-{\mathsf P}_m)v\|\le \delta_m\|v\|_1\quad 
\mbox{for any $v\in H_0^1(Q,\R^2)$},
$$
where ~$\{\delta_m\}$ is a sequence going to zero as $m\to\infty$; cf.\ proof of Proposition~5.3 in~\cite{BRS-2010}.

\smallskip
Suppose that~\eqref{5.15} is false. Then there are functions~$\hat u_n\in B_{\YY_{\delta,1}}(\rho)$ and solutions $g_n\in\XX_1$ of~\eqref{5.12} with $\hat u=\hat u_n$ such that 
\begin{gather}
g_n(1)\in H_N, \quad \|g_n(1)\|=1,\label{5.16}\\
\bigl\|\chi g_n\bigr\|_{H^1(D_1)}\ge n\,\bigl\|\chi g_n\bigr\|_{L^2(D_1)}. 
\label{5.17}
\end{gather}
Now note that $\|g_n(1)\|_{H^2}\le C_2$ for all $n\ge1$, whence it follows, by standard estimates for the 2D Navier--Stokes system, that 
$$
\|g_n\|_{\XX_1}+\|g_n\|_{L^2(J,H^3)}+\|\p_tg_n\|_{L^2(J,V)}\le C_3, \quad n\ge1.
$$
Passing to a subsequence, we can assume that $\{\hat u_n\}$ and~$\{g_n\}$ converge weakly (in appropriate functional spaces) to some functions~$\hat u$ and~$g$, respectively, such that~$\hat u\in B_{\YY_{\delta,1}}(\rho)$, $g\in\XX_1$ is a solution of~\eqref{5.12}, and $\|g(1)\|=1$. Moreover, it follows from~\eqref{5.17} that $\chi g\equiv 0$. Let us show that the latter is impossible. 

Indeed, let an interval $(a,b)\subset J$ and a ball $B\subset D$ be such that 
$$
\chi(t,x)\ge \alpha>0\quad\mbox{for $(t,x)\in (a,b)\times B$}.
$$
Then, by the observability inequality (see Lemma~1 in~\cite{FGIP-2004}), we obtain
$$
\|g(a)\|\le C_4\|g\|_{L^2((a,b)\times D)}\le C_4\alpha^{-1}\|\chi g\|_{L^2(D_1)}=0.
$$
The backward uniqueness for the linearised Navier--Stokes system (see Section~II.8 in~\cite{BV1992}) implies that $g(t)=0$ for $a\le t\le 1$. This contradicts the fact that $\|g(1)\|=1$. The proof is complete. 
\end{proof}

\subsection{Minima of second-order polynomials on Hilbert spaces}
\label{s5.3}

Let $U$ and~$Y$ be real separable Hilbert spaces and let $F:U\times Y\to\R$ be a function of the form
$$
F(u,y)=(Q_yu,u)_U+(a_y,u)_U+b_y,
$$
where $Q_y\in\LL(U)$ is a self-adjoint operator, $a_y, b_y\in U$ for all $y\in Y$, and $(\cdot,\cdot)_U$ stands for the scalar product in~$U$. We assume that 
\begin{equation} \label{5.18}
(Q_yu,u)_U\ge c\|u\|_U^2\quad\mbox{for all $u\in U$, $y\in Y$.}
\end{equation}
In this case, it is easy to see that for any $y\in Y$ the function $u\mapsto F(u,y)$ has a unique global minimum $u^*=u^*(y)$. The following simple proposition establishes some properties of~$u^*$. 

\begin{proposition} \label{p5.6} 
Let us assume that the functions 
\begin{equation} \label{5.19}
y\mapsto Q_y, \quad y\mapsto (a_y,b_y)
\end{equation}
are infinitely smooth from~$Y$ to the spaces $\LL(U)$ and $U\times U$, respectively. Then the unique minimum~$u^*(y)$ is an infinitely smooth function of $y\in Y$. Moreover, this implication remains true if the property of infinite smoothness is replaced by Lipschitz continuity on bounded balls. 
\end{proposition}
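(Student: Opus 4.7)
The plan is to compute the minimum in closed form and then read off its regularity directly from the formula. Since $u\mapsto F(u,y)$ is strictly convex thanks to the coercivity hypothesis~\eqref{5.18}, its unique minimum is the unique critical point, i.e., the solution of $\p_uF(u,y)=2Q_yu+a_y=0$. The coercivity estimate~\eqref{5.18} says that $Q_y$ is self-adjoint and uniformly positive definite, so $Q_y$ is invertible with $\|Q_y^{-1}\|_{\LL(U)}\le c^{-1}$, and the critical-point equation can be solved explicitly:
$$
u^*(y)=-\tfrac12\,Q_y^{-1}a_y.
$$
Note that $b_y$ plays no role since it enters $F$ additively.

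First I would invoke the classical fact that the inversion map $A\mapsto A^{-1}$, defined on the open subset of invertible elements of $\LL(U)$, is real-analytic (in particular $C^\infty$), with derivative $DA^{-1}\!\cdot\!H=-A^{-1}HA^{-1}$. Given the smoothness hypothesis~\eqref{5.19} on $y\mapsto Q_y$, composition yields that $y\mapsto Q_y^{-1}$ is smooth from $Y$ to $\LL(U)$. Combining with the smoothness of $y\mapsto a_y$ and the continuity of the bilinear evaluation $(A,a)\mapsto Aa$ from $\LL(U)\times U$ to $U$, we conclude that $u^*$ is $C^\infty$.

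For the Lipschitz-on-bounded-balls version, the standard resolvent identity
$$
Q_y^{-1}-Q_{y'}^{-1}=Q_y^{-1}(Q_{y'}-Q_y)Q_{y'}^{-1},
$$
together with the uniform bound $\|Q_y^{-1}\|_{\LL(U)}\le c^{-1}$, gives
$$
\|Q_y^{-1}-Q_{y'}^{-1}\|_{\LL(U)}\le c^{-2}\,\|Q_y-Q_{y'}\|_{\LL(U)},
$$
so that $y\mapsto Q_y^{-1}$ inherits Lipschitz continuity on bounded balls from $y\mapsto Q_y$. Writing
$$
u^*(y)-u^*(y')=-\tfrac12\bigl[(Q_y^{-1}-Q_{y'}^{-1})a_y+Q_{y'}^{-1}(a_y-a_{y'})\bigr]
$$
and using that the Lipschitz-on-balls hypothesis on $y\mapsto a_y$ entails boundedness of $\|a_y\|_U$ on bounded balls, we obtain Lipschitz continuity of $u^*$ on bounded balls.

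There is essentially no serious obstacle here: the whole argument is driven by the explicit formula $u^*=-\tfrac12 Q_y^{-1}a_y$ and the elementary smoothness and perturbation theory of operator inversion. The two points requiring attention are that the coercivity~\eqref{5.18} be \emph{uniform} in $y$ (so that inversion takes place on a fixed open set of $\LL(U)$ on which the derivative of the inversion map is bounded) and that $b_y$ contributes nothing to the minimiser.
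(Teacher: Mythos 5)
Your proof is correct and follows essentially the same route as the paper: both reduce the problem to the linear critical-point equation $\p_uF(u,y)=2Q_yu+a_y=0$, whose unique solvability and stability rest on the uniform coercivity~\eqref{5.18}. The only difference is cosmetic: you solve the equation explicitly as $u^*(y)=-\tfrac12\,Q_y^{-1}a_y$ and use the analyticity of operator inversion (and the resolvent identity for the Lipschitz part), whereas the paper invokes the implicit function theorem for~\eqref{5.20} and gets the Lipschitz bound by pairing the difference equation with $u^*(y_1)-u^*(y_2)$ and applying~\eqref{5.18} directly.
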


\begin{proof}
Inequality~\eqref{5.18} implies that $u^*(y)$ is the only solution of the linear equation 
\begin{equation} \label{5.20}
(\p_uF)(u,y)=0. 
\end{equation}
Therefore, the required smoothness of~$u^*$ will be established if we show that the implicit function theorem can be applied to~\eqref{5.20}. This is a straightforward consequence of the fact that~$\p_uF$ is an infinitely smooth function of its arguments, and its derivative in~$u$ coincides with $2Q_y$. 

We now prove that if~$a_y$, $b_y$, and~$Q_y$  are Lipschitz continuous on bounded balls, then so is~$u^*(y)$. Indeed, fix~$R>0$ and take two points $y_1,y_2\in B_Y(R)$. It follows from~\eqref{5.20} and the explicit form of the derivative~$\p_uF$ that
$$
Q_{y_1}\bigl(u^*(y_1)-u^*(y_2)\bigr)=\bigl(Q_{y_2}-Q_{y_1}\bigr)u^*(y_2)
+\frac12\bigl(a_{y_2}-a_{y_1}\bigr). 
$$
Taking the scalar product of this equation with $u^*(y_1)-u^*(y_2)$ and using inequality~\eqref{5.18} and the Lipschitz continuity~$Q_y$ and~$a_y$, we obtain the required result. 
\end{proof}

\subsection{Image of measures under finite-dimensional transformations}
\label{s5.4}
Let $X$ be a separable Banach space  endowed with a norm~$\|\cdot\|$ and  represented as the direct sum 
\begin{equation} \label{5.21}
X=E\dotplus F,
\end{equation}
where $E\subset X$ is a finite-dimensional subspace. Denote by~$\mathsf P_E$ and~$\mathsf P_F$ the projections corresponding to decomposition~\eqref{5.21}. Let~$\lambda\in\PP(X)$ be a measure that can be written as the tensor product of its projections $\lambda_E=(\mathsf P_E)_*\lambda$ and $\lambda_F=(\mathsf P_F)_*\lambda$. Assume also that~$\lambda$ has a bounded support  and that~$\lambda_E$ possesses a $C^1$-smooth density~$\rho$ with respect to the Lebesgue measure on~$E$. The following simple result gives an estimate for the total variation distance between the measure~$\lambda$ and its image under a diffeomorphism of~$X$ acting only along~$E$. 

\begin{proposition} \label{p5.7}
Under the above hypotheses, let $\varPsi:X\to X$ be a transformation that can be written in the form
$$
\varPsi(u)=u+\varPhi(u), \quad u\in X,
$$
where $\varPhi$ is a $C^1$-smooth map such that the image of~$\varPhi$ is contained in~$E$ and
\begin{equation} \label{5.26}
\|\varPhi(u_1)\|\le\varkappa, \quad 
\|\varPhi(u_1)-\varPhi(u_2)\|\le\varkappa\,\|u_1-u_2\| 
\quad \mbox{for all $u_1,u_2\in X$}.
\end{equation}
Then the total variation distance between~$\lambda$ and its image under~$\varPsi$ admits the estimate
\begin{equation} \label{5.22}
\|\lambda-\varPsi_*(\lambda)\|_{\rm var}\le C\varkappa,
\end{equation}
where $C>0$ is a constant not depending on~$\varkappa$. 
\end{proposition}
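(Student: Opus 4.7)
The plan is to exploit the product structure of $\lambda$ to reduce the problem to a family of finite-dimensional density estimates, one for each slice parallel to $E$. Write any bounded Borel test function $f:X\to\R$ via the decomposition $u=e+v$ with $e=\mathsf P_E u\in E$, $v=\mathsf P_F u\in F$, and use the tensor product $\lambda=\lambda_E\otimes\lambda_F$ together with the fact that $\varPhi$ takes values in~$E$ to write
$$
\int_X f\,d\lambda-\int_X f\,d\varPsi_*\lambda
=\int_F\!\biggl(\int_E \bigl[f(e,v)-f(T_v(e),v)\bigr]\,d\lambda_E(e)\biggr)d\lambda_F(v),
$$
where $T_v:E\to E$ is defined by $T_v(e)=e+\varPhi(e+v)$. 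Taking supremum over $\|f\|_\infty\le1$ and using Fubini, one reduces the inequality to showing that
$$
\bigl\|(T_v)_*\lambda_E-\lambda_E\bigr\|_{\mathrm{var}}\le C\varkappa
\qquad\mbox{uniformly in $v\in F$.}
$$

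First I would observe that, since the map $e\mapsto\varPhi(e+v)$ from $E$ to $E$ inherits the Lipschitz bound $\varkappa$ from~\eqref{5.26}, the differential $DT_v=I_E+D(\varPhi(\cdot+v))$ has operator norm $\le 1+\varkappa$ on the finite-dimensional space~$E$. It suffices to prove the estimate for $\varkappa\le\varkappa_0$ small (for larger~$\varkappa$ the trivial bound $\|\lambda-\varPsi_*\lambda\|_{\mathrm{var}}\le 2$ gives~\eqref{5.22} with a suitable $C$), and in that regime $T_v$ is a $C^1$-diffeomorphism of~$E$ onto its image, with $\|T_v^{-1}(y)-y\|\le\varkappa$ and
$$
\bigl|\det DT_v(e)\bigr|=1+O(\varkappa),\qquad \bigl|\det DT_v^{-1}(y)\bigr|=1+O(\varkappa),
$$
where the implicit constant depends only on $N=\dim E$.

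Next, using the density $\rho\in C^1(E)$ of $\lambda_E$ with respect to Lebesgue measure on~$E$, the change of variables formula gives $(T_v)_*\lambda_E=\tilde\rho_v\,de$ with
$$
\tilde\rho_v(y)=\rho\bigl(T_v^{-1}(y)\bigr)\,\bigl|\det DT_v^{-1}(y)\bigr|.
$$
Since $\lambda$ has bounded support, $\rho$ is supported in some fixed ball $B_R\subset E$, and by the $C^1$-regularity the quantity $M:=\|\rho\|_{C^1(B_{R+1})}$ is finite. For $\varkappa\le\varkappa_0\le 1$ the supports of $\tilde\rho_v$ and $\rho$ are both contained in a common bounded set of volume $V$, and the estimate
$$
\bigl|\tilde\rho_v(y)-\rho(y)\bigr|
\le \bigl|\rho(T_v^{-1}(y))-\rho(y)\bigr|+\rho(T_v^{-1}(y))\,\bigl|\,|\det DT_v^{-1}(y)|-1\bigr|
\le M\varkappa+C\varkappa\,\rho(T_v^{-1}(y))
$$
integrates to $\|(T_v)_*\lambda_E-\lambda_E\|_{\mathrm{var}}\le (MV+C)\varkappa$, uniformly in~$v$; the bound $\int\rho(T_v^{-1}(y))dy=1+O(\varkappa)$ follows from a further change of variables. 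Finally, integration over~$v\in F$ against the probability measure $\lambda_F$ yields~\eqref{5.22}.

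The only real subtlety is the bookkeeping in the slice estimate, namely controlling the density ratio $\tilde\rho_v/\rho$; once the $C^1$ bound on~$\rho$ and the $\varkappa$-smallness of the perturbation $\varPhi$ are combined via the finite-dimensional change of variables, everything else is routine. The use of the product structure $\lambda=\lambda_E\otimes\lambda_F$ is essential: it allows one to restrict the perturbation, which a priori depends on all of~$u$, to a purely $E$-valued deformation on each slice, and so bypasses any question of absolute continuity of $\lambda$ along~$F$.
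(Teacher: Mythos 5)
Your proposal is correct and follows essentially the same route as the paper's proof: exploit the product structure $\lambda=\lambda_E\otimes\lambda_F$, perform the finite-dimensional change of variables $e\mapsto e+\varPhi(e+w)$ on each slice (the paper's map $\Theta_w$ is your $T_v^{-1}$), use that the Jacobian determinant is $1+O(\varkappa)$ together with the $C^1$ bound on~$\rho$ over a fixed bounded set, and dispose of large $\varkappa$ by the trivial bound. The only cosmetic difference is that you compare the pushforward density $\tilde\rho_v$ with $\rho$ slice by slice, while the paper estimates the difference of integrals directly; the substance is identical.
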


This proposition is a simple particular case of more general results presented in Chapter~10 of~\cite{bogachev2010}. However, for the reader's convenience, we give a complete proof of Proposition~\ref{p5.7}. 

\begin{proof}
Inequality~\eqref{5.22} needs to be proved for $\varkappa\ll1$, because it is trivial if~$\varkappa$ is separated from zero. Furthermore, since~$\lambda$ has a bounded support, there is no loss of generality in assuming that~$\varPhi$ vanishes outside a large ball.

We first note that if $f\in C_b(X)$, then
$$
(f,\lambda)=\int_F\lambda_F(dw)\int_Ef(v+w)\rho(v)\,dv.
$$
Since $v\mapsto v+\varPhi(v+w)$ is a $C^1$-diffeomorphism of~$E$ for $|\varkappa|\ll1$, we see that
\begin{align*}
(f,\varPsi_*(\lambda))
&=\int_F\lambda_F(dw)\int_Ef\bigl(v+w+\varPhi(v+w)\bigr)\rho(v)\,dv\\
&=\int_F\lambda_F(dw)\int_E\frac{f(v'+w)\rho(\Theta_w(v'))}
{\det\bigl(I+(D\varPhi)(\Theta_w(v')+w)\bigr)}\,dv',
\end{align*}
where $\Theta_w(v')$ denotes the solution of the equation $v+\varPhi(v+w)=v'$ and~$D\varPhi$ stands for the Fr\'echet derivative of~$\varPhi$. 
Combining the above formulas, we get
\begin{equation} \label{5.022}
\delta(f,\lambda):=(f,\varPsi_*(\lambda))-(f,\lambda)=
\int_F\lambda_F(dw)\int_Ef(v+w)\Delta(v,w)\,dv,
\end{equation}
where we set
$$
\Delta(v,w)=
\frac{\rho(\Theta_w(v))}{\det\bigl(I+(D\varPhi)(\Theta_w(v)+w)\bigr)}-\rho(v). 
$$
The $\varkappa$-Lipschitz continuity of~$\varPhi$ implies that 
$$
\bigl|\det\bigl(I+(D\varPhi)(\Theta_w(v)+w)\bigr)^{-1}-1\bigr|
=O(\varkappa)\quad\mbox{for all $v\in E$, $w\in F$},
$$
where $O(\varkappa)$ denotes any function whose absolute value can be estimated by~$C\varkappa$ (uniformly with respect to all other variables). It follows that 
\begin{equation} \label{5.23}
\Delta(v,w)=
\frac{\rho(\Theta_w(v))-\rho(v)}{\det\bigl(I+(D\varPhi)(\Theta_w(v)+w)\bigr)}
+\rho(v)O(\varkappa). 
\end{equation}
Furthermore, using inequalities~\eqref{5.26}, we derive
\begin{equation} \label{5.24}
\|\Theta_w(v)\|\le \tfrac{1}{1-\varkappa}\,\bigl(\|v\|+\varkappa\|w\|\bigr),\quad
\|\Theta_w(v)-v\|\le \tfrac{\varkappa}{1-\varkappa}\,\bigl(\|v\|+\|w\|\bigr).
\end{equation}
Substituting~\eqref{5.23} into~\eqref{5.022} and using inequalities~\eqref{5.24}, we obtain
\begin{align*}
|\delta(f,\lambda)|
&\le\int_F\lambda_F(dw)\int_E|f(v+w)|\,
\frac{|\rho(\Theta_w(v))-\rho(v)|}{\det\bigl(I+(D\varPhi)(\Theta_w(v)+w)\bigr)}
\,dv+O(\varkappa)\\
&\le\|f\|_\infty\int_F\lambda_F(dw)\int_E\bigl|\rho(v)-\rho(v+\varPhi(v+w))\bigr|\,dv
+O(\varkappa).
\end{align*}
Taking the supremum over $f\in C_b(X)$ satisfying $\|f\|_\infty\le1$ and noting that the integrals on the right-hand side can be taken over sufficiently large balls, we obtain
\begin{align*}
\|\lambda-\varPsi_*(\lambda)\|_{\rm var}
&\le\frac12\int\limits_{B_F(R)}\lambda_F(dw)\int\limits_{B_F(R)}
\bigl|\rho(v)-\rho(v+\varPhi(v+w))\bigr|\,dv+O(\varkappa)\\
&\le\frac\varkappa2\int\limits_{B_F(R)}\lambda_F(dw)\int\limits_{B_F(R)}
\int_0^1\bigl|\nabla\rho(v+\theta\varPhi(v+w))\bigr|\,d\theta\,dv+O(\varkappa),
\end{align*}
where we used the first inequality in~\eqref{5.26}. 
It is straightforward to see that the right-hand side of the above inequality can be estimated by~$C\varkappa$. 
\end{proof}

\bigskip
{\bf Acknowledgment.}
The author is grateful to Lihu Xu for stimulating discussion.

\addcontentsline{toc}{section}{Bibliography}
\def\cprime{$'$} \def\cprime{$'$}
\providecommand{\bysame}{\leavevmode\hbox to3em{\hrulefill}\thinspace}
\providecommand{\MR}{\relax\ifhmode\unskip\space\fi MR }
% \MRhref is called by the amsart/book/proc definition of \MR.
\providecommand{\MRhref}[2]{%
  \href{http://www.ams.org/mathscinet-getitem?mr=#1}{#2}
}
\providecommand{\href}[2]{#2}

\end{document}